\newtheorem{thm}{Theorem}
\newtheorem{lem}{Lemma}
\newtheorem{prop}{Proposition}
\newtheorem{defn}{Definition}
\begin{document}

\title{Partial linearization for nonautonomous differential equations
}


\author{Xia Pan        \and
       Zuohuan Zheng 
}


\institute{Xia Pan \at
              Institute of Applied Mathematics, Academy of Mathematics and System Sciences, \\ Chinese Academy of
Sciences, Beijing 100190, China \\
              \email{ panxia13@mails.ucas.ac.cn}           
           \and
           Zuohuan Zheng \at
              Institute of Applied Mathematics, Academy of Mathematics and System Sciences, \\ Chinese Academy of
Sciences, Beijing 100190, China
}

\date{Received: date / Accepted: date}

\maketitle

\begin{abstract}
In this paper, we prove the partial linearization for $n$-dimensional nonautonomous differential equations. The conditions are formulated in terms of the dichotomy spectrum.
\keywords{Partial linearization \and $n$-dimensional nonautonomous differential equatuions \and Dichotomy spectrum}
\subclass{34A30 \and 34A34 \and 34D09}
\end{abstract}

\section{Introduction}
\label{intro}
Linearization is approximation or narrowing the scope under certain conditions, then we can deal with the nonlinear differential equation approximately as a linear differential equation. Usually, we expand the nonlinear form into Taylor series and eliminate higher order term, then we can obtain the approximate linear function.\\
 In 1879, Poincar\'{e} published his first major article concerning a class of automorphic functions. He found the normal form theory for autonomous differential equations $\dot{x}=f(x)$ near a rest point, which was a critical tool for Arnold's spectral sequence\cite{article1}. Spectral sequence was introduced by Vladimir Arnold in 1975. In 1959, Hartman and Grobman proved the Hartman-Grobman theorem or linearization theorem, which was a theorem about the dynamical systems in the neighbourhood of a hyperbolic equilibrium point\cite{article2}. The theorem states that the dynamics in a domain near a hyperbolic equilibrium point is equivalent to that of its linearization near this equilibrium point, provided that no eigenvalue of the linearization has its real part equal to zero. Therefore, when dealing with such dynamical systems one can use the simpler linearization of the system to analyze its behaviour around equilibria.\\
 In 2002, S.Siegmund\cite{article7} investigated lineariation of the nonautonomous differential equations. He extended Poincar\'{e}'s normal form theory to differential equations of $C^{k}$ Carath\'{e}odory type, $k\geq 2$, showing the system
 $$\dot{x}=f(t,x)$$
 is locally $C^{k}$ equivalent to a system
 $$\dot{x}=A(t)x+g(t,x)$$
  with zero reference solution, provided that the linearization along the reference solution satisfies a nonresonance condition. In 2015, P. Bonckaert, P. De Maesschalck, T. S. Doan and S.Siegmund \cite{article9} proved partial linearization for planar nonautonomous differential equations, they proved that the system
  $$\dot{x}=f(t,x), x\in\mathbb{R}^{2}, $$
  is equivalent to the following system
  $$\dot{x}=\begin{pmatrix}
\alpha_1(t) &         \\
            &\alpha_2(t)
\end{pmatrix}x+\begin{pmatrix}
p(t,x_{1})\\
q(t,x_{1})x_{2}
\end{pmatrix}.$$
However the result is only suitable for 2-dimensional nonautonomous differential equations. There are still no work on partial linearization for $n$-dimensional nonautonomous differential equations, $n\geq3$.
 We prove partial linearization for $n$-dimensional nonautonomous differential equations. We adopt  $C^{k}$ equivalence simplifying the equation step by step. To build a equivalence, we adopt the function of the form like E.Nelson's\cite{article3}. Then, under certain conditions, we can see that the original system is equivalent to a partial linearization system.\\
Exponential dichotomy introduced by Perron\cite{article13} plays an important role in the study of invariant manifolds. There are many paper\cite{article14}\cite{article15}\cite{article16}\cite{article18} concerning exponential dichotomy. Based on the study of classical exponential dichotomy, the dichotomy spectral theory was introduced by Sacker and Sell in \cite{article19}. The dichotomy spectrum is quite important in the field of dynamical systems, especially concerning nonautonomous differential equations, we can use dichotomy spectrum to get the normal forms\cite{article7}\cite{article17}. In this paper, we will use dichotomy spectrum as well.\\
 The structure of this paper is as follows. In section 2, we list three results which are key to our main result. The fist one is dichotomy spectrum for $n$-dimensional nonautonomous differential equations. The second one is normal forms for $n$-dimensional nonautonomous differential equations. The third one is very important. We can get a simplified system through the third one. In section 3, we show the existence of invariant manifold. In section 4, we estimate the higher derivatives of solutions. In section 5, we show the main theorem and prove it. In section 6, we give an example.

\section{Preliminaries}
Throughout this paper we use the following relations and marks:\\
1)$\lambda_{i}\triangleq[\underline{\lambda}_{i},\bar{\lambda}_{i}]$,$\lambda_{j}\triangleq[\underline{\lambda}_{j},\bar{\lambda}_{j}]$ ;\\
2)$\lambda_{i}+\lambda_{j}\triangleq[\underline{\lambda}_{i}+\underline{\lambda}_{j},\bar{\lambda}_{i}+\bar{\lambda}_{j}]$;\\
3)$\alpha\lambda_{i}\triangleq[\alpha\underline{\lambda}_{i},\alpha\bar{\lambda}_{i}]$, $\alpha\in\mathbb{R}$;\\
4)$\lambda_{i}<\lambda_{j}$ $\Longleftrightarrow$ $\bar{\lambda}_{i}<\underline{\lambda}_{j}$;\\
5)$B(t)\triangleq\begin{pmatrix}
\alpha_1(t) &         &        &\\
         &\alpha_2(t) &        &\\
         &         &\ddots  &\\
         &         &        &\alpha_n(t)
\end{pmatrix}$;\\

\subsection{\textit{Dichotomy spectrum for nonautonomous differential equations}}

In this section, we will show a spectral theorem which is proved by Siegmund\cite{article6}.\\
Consider a linear system of differential equations
\begin{equation}
  \dot{x}=A(t)x,
\end{equation}
with $A\in L_{loc}^{1}(\mathbb{R},\mathbb{R}^{N\times N})$, i.e., $A$ is a locally integrable matrix function. Let
$$\Phi:\mathbb{R}\times\mathbb{R}\rightarrow\mathbb{R}^{N\times N}:\  (t,\tau)\mapsto\Phi(t,\tau),$$
denote its \emph{evolution operator}, i.e., $\Phi(\cdot,\tau)\xi$ solves the initial value problem (1), with $x(\tau)=\xi$,~$\tau\in\mathbb{R},~\xi\in\mathbb{R}^{N}$.\\
An \emph{invariant projector} of (1) is defined to be a function $P:\mathbb{R}\rightarrow\mathbb{R}^{N\times N}$ of projections $P(t)$, $t\in\mathbb{R}$, such that
   $$P(t)\Phi(t,s)=\Phi(t,s)P(s)   \quad  \text{for} \quad t,s\in\mathbb{R}.$$
We shall say that (1) admits an \emph{exponential dichotomy} (\textbf{ED}) if there are an invariant projector $P$ and constants $K\geq1$ and $\alpha>0$ such that
  $$\parallel\Phi(t,s)P(s)\parallel\leq Ke^{-\alpha(t-s)} \quad\quad\text{for} \quad t\geq s,$$
  $$\parallel\Phi(t,s)[I-P(s)]\parallel\leq Ke^{\alpha(t-s)} \quad  \text{for} \quad t\leq s.$$
The \emph{dichotomy spectrum} of (1) is the set
     $$\Sigma(A)=\{\gamma\in\mathbb{R}: \dot{x}=[A(t)-\gamma I]x ~~\text{admits no ED}\}.$$
The structure of $\Sigma(A)$ is described in the following theorem:

\begin{thm}\cite{article6}
\label{th1}
  Suppose that (1) has \emph{bounded growth}, i.e., there exist constants $K\geq 1$ and $a\geq 0$ such that $$\parallel\Phi(t,s)\parallel\leq Ke^{a\mid t-s\mid}~~\text{for}~~ t,s\in \mathbb{R}.$$
  Then, the linear system (1) has a nonempty and compact dichotomy spectrum $$\Sigma(A)=[\underline{\lambda}_{1},\bar{\lambda}_{1}]\cup\cdots\cup[\underline{\lambda}_{n},\bar{\lambda}_{n}] ~~\text{where}~~ 1\leq n\leq N,$$
  and the spectral manifolds $\mathcal{W}_{0}$ and $\mathcal{W}_{n+1}$ are trivial, Moreover
  $$\mathcal{W}_{0}\oplus\cdots\oplus\mathcal{W}_{n+1}=\mathbb{R}\times\mathbb{R}^{N}.$$
 with the properties that for any $\varepsilon>0$ there exists $K(\varepsilon)\geq1$ such that the following statements hold:\\
  $$\parallel\Phi(t,s)\parallel\leq K(\varepsilon)e^{(\bar{\lambda}_{1}+\varepsilon)(t-s)}~~ \text{for}~~ t\geq s,$$
  $$\parallel\Phi(t,s)\parallel\leq K(\varepsilon)e^{(\underline{\lambda}_{n}-\varepsilon)(t-s)}~~\text{for}~~ t\leq s.$$
  \end{thm}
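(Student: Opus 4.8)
This statement is the Sacker--Sell spectral theorem, so the plan is to study the \emph{resolvent set} $\rho(A):=\mathbb{R}\setminus\Sigma(A)$ and the way the invariant projector of the shifted system $\dot x=[A(t)-\gamma I]x$ varies with the parameter $\gamma$. For $\gamma\in\rho(A)$ the shifted evolution operator is $e^{-\gamma(t-s)}\Phi(t,s)$, and the image of its invariant projector $P_\gamma$ singles out the solutions decaying relative to $e^{\gamma t}$; accordingly I would introduce the stable fibre
$$S_\gamma(\tau)=\{\xi\in\mathbb{R}^{N}:\sup_{t\ge\tau}\|e^{-\gamma(t-\tau)}\Phi(t,\tau)\xi\|<\infty\},$$
its complement $U_\gamma(\tau)=\ker P_\gamma(\tau)$, and the integer $d(\gamma):=\dim S_\gamma(\tau)$. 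Two facts follow at once from bounded growth: if $\gamma>a$ then $\|e^{-\gamma(t-s)}\Phi(t,s)\|\le Ke^{(a-\gamma)(t-s)}$ for $t\ge s$, giving an ED with $P\equiv I$, while if $\gamma<-a$ the reverse estimate gives an ED with $P\equiv 0$. Hence $\Sigma(A)\subseteq[-a,a]$, and $d(\gamma)=N$ for $\gamma>a$, $d(\gamma)=0$ for $\gamma<-a$.

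The analytic core is the \emph{roughness} of exponential dichotomy: an ED survives an $L^1_{loc}$-small perturbation of the coefficients, so the set of $\gamma$ admitting an ED is open; thus $\rho(A)$ is open and $\Sigma(A)$ is closed, hence compact. Roughness also forces $P_\gamma$, and therefore $d(\gamma)$, to be locally constant on $\rho(A)$. Combining this with a monotonicity observation --- if $\gamma_1<\gamma_2$ lie in $\rho(A)$ and $\xi\in S_{\gamma_1}(\tau)$, then $e^{-\gamma_2(t-\tau)}\Phi(t,\tau)\xi=e^{-(\gamma_2-\gamma_1)(t-\tau)}\,e^{-\gamma_1(t-\tau)}\Phi(t,\tau)\xi\to 0$, so $S_{\gamma_1}(\tau)\subseteq S_{\gamma_2}(\tau)$ --- shows that $d$ is a nondecreasing step function on $\rho(A)$ valued in $\{0,1,\dots,N\}$. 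Its jumps are straddled by the connected components of $\Sigma(A)$, and since $d$ climbs from $0$ to $N$ it has at most $N$ jumps, so $\Sigma(A)=[\underline\lambda_1,\bar\lambda_1]\cup\cdots\cup[\underline\lambda_n,\bar\lambda_n]$ with $1\le n\le N$. Nonemptiness is immediate: if $\Sigma(A)=\emptyset$ then $\rho(A)=\mathbb{R}$ is connected, forcing $d$ to be constant, contradicting $d=0$ near $-\infty$ and $d=N$ near $+\infty$.

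For the remaining assertions I would pick one representative $\gamma$ from each of the $n+1$ connected components of $\rho(A)$; the corresponding invariant stable fibres $\mathcal{S}_{\gamma}$ form a nested, invariant filtration of $\mathbb{R}\times\mathbb{R}^{N}$, and the spectral manifold $\mathcal{W}_i$ is the invariant complement of one member in the next, its fibre dimension being exactly the size of the $i$-th jump of $d$. Fibrewise directness of this filtration yields the Whitney sum $\mathcal{W}_0\oplus\cdots\oplus\mathcal{W}_{n+1}=\mathbb{R}\times\mathbb{R}^{N}$, while $\mathcal{W}_0$ and $\mathcal{W}_{n+1}$ are trivial precisely because bounded growth excludes solutions that decay or grow faster than every exponential. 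Finally the two growth bounds are read off from the EDs just outside the spectrum, ordered so that $\bar\lambda_1$ is its top and $\underline\lambda_n$ its bottom: at $\gamma=\bar\lambda_1+\varepsilon$ one has $P\equiv I$, so $\|e^{-\gamma(t-s)}\Phi(t,s)\|\le K(\varepsilon)$ for $t\ge s$, i.e. $\|\Phi(t,s)\|\le K(\varepsilon)e^{(\bar\lambda_1+\varepsilon)(t-s)}$; symmetrically, at $\gamma=\underline\lambda_n-\varepsilon$ one has $P\equiv 0$ and the backward estimate gives the bound for $t\le s$.

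I expect the main obstacle to be the roughness theorem and the attendant claim that $d(\gamma)$ is locally constant: making rigorous that a dichotomy persists under a small perturbation of $A$ and that the associated projector depends continuously on $\gamma$ --- so that an integer-valued invariant cannot jump without the shifted system losing its ED, i.e. without $\gamma$ entering $\Sigma(A)$ --- is where the genuine work lies. By comparison the monotonicity of $S_\gamma$, the dimension count bounding the number of intervals, the construction of the Whitney sum, and the final growth estimates are comparatively routine once the endpoints of the spectrum are in hand.
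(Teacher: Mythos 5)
The paper does not prove this statement: Theorem \ref{th1} is quoted verbatim from Siegmund's paper on the dichotomy spectrum (reference [6]) and is used as an imported tool, so there is no in-paper argument to compare yours against. Judged against the standard Sacker--Sell/Siegmund proof, your outline follows the right overall route --- confine $\Sigma(A)$ to $[-a,a]$ via bounded growth, show $\rho(A)$ is open, and track the rank $d(\gamma)$ of the stable fibre --- but it contains one genuine gap and one misjudgement of where the work lies.

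The gap is the step ``its jumps are straddled by the connected components of $\Sigma(A)$, \ldots\ so $\Sigma(A)$ is a union of at most $N$ intervals.'' Knowing that $d$ is a nondecreasing integer-valued function on the open set $\rho(A)$ with at most $N$ jumps does not by itself bound the number of components of $\Sigma(A)$: a priori the spectrum could have many components (even a Cantor-like structure) sitting in a region across which $d$ never changes. What closes this is the \emph{gap lemma}: if $\gamma_1<\gamma_2$ both lie in $\rho(A)$ and $\dim S_{\gamma_1}(\tau)=\dim S_{\gamma_2}(\tau)$, then by your monotonicity $S_{\gamma_1}=S_{\gamma_2}$, hence also $U_{\gamma_1}=U_{\gamma_2}$ by a dimension count, and one verifies directly that this common splitting furnishes an ED projector for every intermediate $\gamma$ (the stable estimate improves by $e^{-(\gamma-\gamma_1)(t-\tau)}$, the unstable one by $e^{(\gamma_2-\gamma)(t-\tau)}$), so $[\gamma_1,\gamma_2]\subset\rho(A)$. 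Only then does it follow that distinct spectral components are separated by jumps of $d$, whence there are at most $N$ of them. You need to state and prove this lemma; it is the heart of the interval structure. Conversely, the step you single out as the ``genuine work'' --- roughness of the ED under perturbation --- is not actually needed: the passage from $\gamma$ to $\gamma'$ is the scalar shift $(\gamma-\gamma')I$, which merely multiplies the shifted evolution operator by $e^{-(\gamma'-\gamma)(t-s)}$, so the very same invariant projector yields an ED for all $\gamma'$ with $|\gamma'-\gamma|<\alpha$; openness of $\rho(A)$ and local constancy of $P_\gamma$ come for free, with no perturbation theory. The remaining items (nonemptiness, triviality of $\mathcal{W}_0$ and $\mathcal{W}_{n+1}$ from bounded growth, the Whitney sum, and the two growth estimates read off from the EDs at $\bar{\lambda}_1+\varepsilon$ and $\underline{\lambda}_n-\varepsilon$) are handled correctly in your sketch.
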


\subsection{\textit{Normal forms for nonautonomous differential equations}}
  We will introduce the local equivalence transformation for nonautonomous differential equations with respect to a fixed and given solution.
\begin{defn}\cite{article7}
  Let $N,M\in \mathbb{N}=\{1,2,\ldots\},D\subset\mathbb{R}\times\mathbb{R}^{N}$ open and $f:D\rightarrow \mathbb{R}^{M}$ a function.\\
  \begin{enumerate}
  \item Then $f$ is a \textit{Carath\'{e}odory function} if for every interval $I\subset \mathbb{R}$ and every open set $U\subset \mathbb{R}^{N}$ with $I\times U\subset D$ the following holds:
  \begin{enumerate}
   \item for a.a. $t\in I$ the mapping $$f\mid_{I\times U}(t,\cdot):U\rightarrow \mathbb{R}^{M},$$ is continuous,\\
   \item for all $x\in U$ the mapping
   $$f\mid_{I\times U}(\cdot,x):I\rightarrow \mathbb{R}^{M},$$
    is measurable (with respect to the Borel $\sigma$-algebras on $I$ and $\mathbb{R}^{M}$).
       \end{enumerate}
  \item Then $f$ is a \textit{$C^{k}$ Carath\'{e}odory function}, $k\geq 0$, if\\
   \begin{enumerate}
  \item  for a.a. $t\in \mathbb{R}$ and all $x\in \mathbb{R}^{N}$ with $(t,x)\in D$ the $\textit{k}$th partial derivative $D_{x}^{k}f(t,x)$ exists,\\
  \item  for every $j\in {0,\ldots,k}$ the mapping $$D_{x}^{k}f:D\rightarrow L^{j}(\mathbb{R}^{N},\mathbb{R}^{M}),$$   is a \textit{Carath\'{e}odory function}
       \end{enumerate}
   \end{enumerate}
\end{defn}

  Let us define $\varepsilon>0,$ $x_{0}\in \mathbb{R}^{N},$ $\mu:\mathbb{R}\rightarrow \mathbb{R}^{N}$ and the neighbourhoods
  $$B_{\varepsilon}(x_{0})=\{x\in \mathbb{R}^{N}:\|x-x_{0}\|<\varepsilon\},$$
  $$U_{\varepsilon}(\mu)=\{(t,x)\in \mathbb{R}\times\mathbb{R}^{N}:x\in B_{\varepsilon}(\mu(t))\}.$$
  Let us consider differential equations of $C^{k}$ Carath\'{e}odory type, $k\geq 0,$ with reference solutions
  \begin{equation}
     \dot{x}=f(t,x),~~\mu_{0}:\mathbb{R}\rightarrow \mathbb{R}^{N},
  \end{equation}
  \begin{equation}
     \dot{x}=g(t,x),~~\nu_{0}:\mathbb{R}\rightarrow \mathbb{R}^{N},
  \end{equation}
  i.e.,$f:D_{f}\subset \mathbb{R}\times\mathbb{R}^{N}\rightarrow\mathbb{R}^{N}$ and $g:D_{g}\subset \mathbb{R}\times\mathbb{R}^{N}\rightarrow\mathbb{R}^{N}$ are $C^{k}$ Carath\'{e}odory functions. We assume
  that tubular neighbourhood of the reference solution are contained in the corresponding sets of definition;
  i.e., there exist $r>0$ and $p>0$ such that
  $$U_{r}(\mu_{0})\subset D_{f}~~\text{and}~~U_{p}(\nu_{0})\subset D_{g}.$$

\begin{defn}\cite{article7}
  Consider the two equations (2) and (3). If there exist $r^{'}$, $p^{'}$ with $0<r^{'}\leq r$ and $0<p^{'}\leq p$
  together with continuous functions
  $$H:U_{r^{'}}(\mu_{0})\rightarrow \mathbb{R}^{N},~~H^{-1}:U_{p^{'}}(\nu_{0})\rightarrow \mathbb{R}^{N},$$
  then $H$ is called a local $C^{k}$ equivalence between the system (2) with solution $\mu_{0}$ and system (3) with solution $\nu_{0}$, if the following statements are valid:\\
 \begin{enumerate}
 \item  For each $t\in \mathbb{R}$ the mappings
      $$H(t,\cdot):B_{r^{'}}(\mu_{0}(t))\rightarrow H(t,B_{r^{'}}(\mu_{0}(t)))\subset B_{p}(\nu_{0}(t)), $$
      $$H^{-1}(t,\cdot):B_{p^{'}}(\nu_{0}(t))\rightarrow
      H^{-1}(t,B_{p^{'}}(\nu_{0}(t)))\subset B_{r}(\mu_{0}(t)), $$
      are $C^{k}$ diffeomorphisms (or homeomorphisms if $k=0$) with
      $$H(t,H^{-1}(t,x))=x,~~\text{for}~~x\in B_{p^{'}}(\nu_{0}(t)),$$
      $$H^{-1}(t,H(t,x))=x,~~\text{for}~~x\in B_{r^{'}}(\mu_{0}(t)).$$
  \item If $\mu$ is a solution of (2) in $U_{r^{'}}(\mu_{0})$ then$ H(\cdot,\mu(\cdot))$ is a solution of (3).\\
      If $\nu$ is a solution of (3) in $U_{p^{'}}(\nu_{0})$ then $H^{-1}(\cdot,\nu(\cdot))$ is a solution of (2).\\
  \item The reference solutions are mapped uniformly onto each other:
      $$\lim_{x\rightarrow 0}H(t,\mu_{0}(t)+x)=\nu_{0}(t)~~\text{uniformly in} ~~t\in\mathbb{R},$$

      $$\lim_{x\rightarrow 0}H^{-1}(t,\nu_{0}(t)+x)=\mu_{0}(t)~~\text{uniformly in} ~~t\in\mathbb{R}.$$
 \end{enumerate}
\end{defn}

\begin{thm}\cite{article7}
\label{th2}
  Consider a differential equation
  \begin{equation}
     \dot{x}=f(t,x),
  \end{equation}
   together with a reference solution $\mu_{0}:\mathbb{R}
  \rightarrow \mathbb{R}^{N}.$ Assume that\\
  \begin{enumerate}
  \item $f:D \subset \mathbb{R}\times \mathbb{R}^{N}\rightarrow \mathbb{R}^{N} is ~a~ C^{k}$
      Carath\'{e}odory function for $k\geq 2,$\\
  \item a neighbourhood $U_{r}(\mu_{0})$ is contained in $D$ for some $r>0,$\\
  \item the linearization $\dot{x}=D_{x}f(t,\mu_{0}(t))x$ of (4) along $\mu_{0}$ has bounded growth and therefore
      the dichotomy spectrum consists of $n$, $1\leq n\leq N$, compact intervals $\lambda_{i}=[\underline{\lambda}_{i},\bar{\lambda}_{i}], i=1,\ldots,n,$\\
 \item higher order terms of f in $x$ along $\mu_{0}$ are uniformly bounded in $t$, i.e., there is a $M>0$ such that\\
      $$\|D_{x}^{j}f(t,\mu_{0}(t))\|\leq M ~~\text{for a.a.}~~ t\in\mathbb{R}~~\text{and all}~~ j\in \{2,\ldots,k\}.$$

  \end{enumerate}
  Then (4) is locally $C^{k}$ equivalent to a differential equation
  \begin{equation}
    \dot{x}=g(t,x),
  \end{equation}
  with zero solution and (5) is in normal form, i.e., it holds that\\
  \begin{enumerate}
  \item g: $U_{q}(0)\rightarrow \mathbb{R}^{N}$ is a $C^{k}$ Carath\'{e}odory function for some $q>0,$\\
  \item the linearization $\dot{x}=D_{x}g(t,0)x$ of (5) along the zero solution has the same dichotomy spectrum
            as the linearization of (4) along $\mu_{0}$ and additionally is block-diagonalized, each block
            corresponds to a spectral interval $\lambda_{i},$\\
  \item all nontrivial Taylor components of g of order 2 to k are resonant, i.e., for every $j\in \{1,\ldots,n\}$
            and $l\in N_{o}^{n}, 2\leq |l|\leq k$ with
            $$\lambda_{j}\bigcap\sum_{i=1}^{n}l_{i}\lambda_{i}=\emptyset,$$
            we have $D_{x}^{l}g_{j}(t,0)\equiv0$ on $\mathbb{R}.$

  \end{enumerate}
\end{thm}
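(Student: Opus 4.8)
The plan is to bring (4) into normal form through three successive local $C^{k}$ equivalences, following the classical normal-form scheme adapted to the nonautonomous Carath\'{e}odory setting. First I would translate the reference solution to the origin: setting $y=x-\mu_{0}(t)$ and using $\dot{\mu}_{0}=f(t,\mu_{0})$, the equation becomes $\dot{y}=\tilde{f}(t,y)$ with $\tilde{f}(t,y):=f(t,y+\mu_{0}(t))-f(t,\mu_{0}(t))$, so that $\tilde{f}(t,0)\equiv0$ (the zero solution is now the reference solution) and $D_{y}\tilde{f}(t,0)=D_{x}f(t,\mu_{0}(t))$ (the linearization is unchanged). This translation is a $C^{k}$ equivalence by construction, and assumption 4 transfers to uniform-in-$t$ bounds on $D_{y}^{j}\tilde{f}(t,0)$, $2\le j\le k$.

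Second, I would block-diagonalize the linear part. By Theorem \ref{th1} the linearization has dichotomy spectrum $\lambda_{1}\cup\cdots\cup\lambda_{n}$ with associated splitting $\mathbb{R}\times\mathbb{R}^{N}=\mathcal{W}_{1}\oplus\cdots\oplus\mathcal{W}_{n}$. Choosing a bounded, boundedly invertible (Lyapunov) change of frame adapted to this splitting brings the linear part to the block-diagonal form $B(t)$, each block carrying exactly the spectral interval $\lambda_{i}$. Being linear in $x$, this is again a $C^{k}$ equivalence preserving the zero solution and the spectrum, which establishes conclusion 2.

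Third, and this is the core, I would remove the non-resonant Taylor terms order by order. Writing the equation as $\dot{x}=B(t)x+\sum_{m=2}^{k}f_{m}(t,x)+o(|x|^{k})$ with $f_{m}$ homogeneous of degree $m$ in $x$, for $m=2,\ldots,k$ in turn I seek a near-identity transformation $x=y+h_{m}(t,y)$ with $h_{m}$ in the space $\mathcal{H}_{m}$ of time-dependent homogeneous degree-$m$ vector fields, under which the new degree-$m$ field becomes $g_{m}=f_{m}-\mathcal{L}_{m}h_{m}$, where $\mathcal{L}_{m}h:=\partial_{t}h+D_{y}h\cdot B(t)y-B(t)h$. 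This $\mathcal{L}_{m}$ is the generator of the flow induced on $\mathcal{H}_{m}$ by $\dot{y}=B(t)y$; monomial by monomial its dichotomy spectrum is the union of the intervals $\sum_{i=1}^{n}l_{i}\lambda_{i}-\lambda_{j}$ over $|l|=m$, $1\le j\le n$, computed with the interval arithmetic fixed in the Preliminaries. Splitting $\mathcal{H}_{m}$ into its invariant resonant and non-resonant subspaces, a component $(l,j)$ is non-resonant precisely when $\lambda_{j}\cap\sum_{i=1}^{n}l_{i}\lambda_{i}=\emptyset$, i.e. when $0$ lies outside $\sum_{i=1}^{n}l_{i}\lambda_{i}-\lambda_{j}$. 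On the non-resonant subspace $0$ is then in a spectral gap, so by definition of the dichotomy spectrum the induced system admits an exponential dichotomy with projector $Q$, and the homological equation $\mathcal{L}_{m}h=f_{m}^{\mathrm{nonres}}$ has the bounded solution $h(t)=\int_{-\infty}^{t}\Xi(t,s)Q(s)f_{m}^{\mathrm{nonres}}(s)\,ds-\int_{t}^{\infty}\Xi(t,s)[I-Q(s)]f_{m}^{\mathrm{nonres}}(s)\,ds$, where $\Xi$ is the induced evolution on $\mathcal{H}_{m}$. Taking $h_{m}$ equal to this solution cancels every non-resonant order-$m$ term while leaving the resonant ones in place, which is conclusion 3; since $h_{m}$ is homogeneous of degree $m\ge2$ it fixes the origin and alters no term of order below $m$, so the normalizations obtained at lower orders are preserved.

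The main obstacle will be the analytic bookkeeping around this homological step. I would need to check that the induced evolution $\Xi$ on $\mathcal{H}_{m}$ inherits bounded growth and the stated dichotomy spectrum from $B(t)$ (a symmetric-tensor-power estimate together with Theorem \ref{th1}), that the uniform bounds of assumption 4 make $f_{m}^{\mathrm{nonres}}$ bounded so the Green's-function integrals converge and produce a $C^{k}$ Carath\'{e}odory field $h_{m}$, and that the resulting near-identity map is a genuine local $C^{k}$ diffeomorphism on a sufficiently small tubular neighbourhood $U_{r'}(\mu_{0})$ with all properties of Definition 2 --- in particular that it and its inverse carry reference solution to reference solution uniformly in $t$, which follows from $h_{m}(t,0)\equiv0$ and the uniform-in-$t$ smallness of $h_{m}$. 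Composing the finitely many transformations for $m=2,\ldots,k$ and absorbing the flat remainder of order $o(|x|^{k})$ yields a $C^{k}$ Carath\'{e}odory $g$ (conclusion 1), and hence the claimed local $C^{k}$ equivalence of (4) to the normal form (5).
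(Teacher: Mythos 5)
The paper does not prove Theorem \ref{th2}: it is quoted from Siegmund \cite{article7} and used as a black box, so there is no in-paper argument to compare against. Your sketch correctly reproduces the strategy of the cited source --- translation of the reference solution to zero, block-diagonalization of the linear part by a kinematic similarity adapted to the spectral splitting of Theorem \ref{th1}, and inductive removal of the non-resonant Taylor terms by solving the homological equation with the Green's function of the exponential dichotomy that the condition $0\notin\sum_{i=1}^{n}l_{i}\lambda_{i}-\lambda_{j}$ guarantees for the induced system on homogeneous vector fields --- so it takes essentially the same route as the original proof.
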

Now it is easy to get the following theorem.
\begin{thm}\cite{article9}
\label{th3}
 Consider the following nonautonomous planar differential equation
 \begin{equation}
   \dot{x}=f(t,x),
 \end{equation}
 with a given solution $\mu:\mathbb{R}\rightarrow\mathbb{R}^{n}$ which we assume to satisfy the following conditions:\\
 \begin{enumerate}
\item $f:D\subset\mathbb{R}\times\mathbb{R}^{n}\rightarrow\mathbb{R}^{n}$ is a  $C^{k}$ Carath\'{e}odory function, where D contains a neighborhood $B_{r}(\mu)$ for some $r>0$.\\
 \item The linearization of (6) along $\mu$ has bounded growth and its dichotomy spectrum $\Sigma_{dich}$ consists of n disjoint compact intervals $[\underline{\lambda}_{n},\bar{\lambda}_{n}]\cup\cdots\cup[\underline{\lambda}_{1},\bar{\lambda}_{1}]$, where $[\underline{\lambda}_{i},\bar{\lambda}_{i}]=\lambda_{i}$,~~$\lambda_{n}<\cdots<\lambda_{1}$.\\
 \item Higher order derivative terms of $f$ in $x$ are bounded in $B_{r}(\mu)$, i.e., for all $\alpha_{1},\cdots,\alpha_{n}\in\mathbb{N}$ there is an $M_{\alpha_{1},\cdots,\alpha_{n}}>0$ such that
 $$\parallel\frac{\partial^{\alpha_{1}+\cdots+\alpha_{n}}f_{1}}{\partial^{\alpha_{1}}x_{1}\cdots\partial^{\alpha_{n}}x_{n}}\parallel,\cdots,\parallel\frac{\partial^{\alpha_{1}+\cdots+\alpha_{n}}f_{1}}{\partial^{\alpha_{1}}x_{1}\cdots\partial^{\alpha_{n}}x_{n}}\parallel\leq M_{\alpha_{1},\cdots,\alpha_{n}}
 ~~\text{for}~~(t,x)\in B_{r}(\mu)$$
 where the norm is operate norm.
 \end{enumerate}
 The system (6) together with reference solution $\mu$ is locally $C^{k}$ equivalent to the following system
\begin{equation}
 \dot{x}=B(t)x+r_{1}(t,x),
\end{equation}
with the zero solution, where $r_{1}:\mathbb{R}\times\mathbb{R}^{n}\rightarrow\mathbb{R}^{n}$ is a  $C^{k}$ Carath\'{e}odory function satisfying the following properties:\\

\begin{enumerate}
\item $r_{1}(t,0)=0$ and $D_{x}r_{1}(t,0)=0$.\\
\item The partial derivatives of $r_{1}$ up to any finite order are globally bounded.\\
\item For $i=1,2,\cdots,n$, the dichotomy spectrum of the scalar equation $\dot{x_{i}}=a_{i}(t)x_{i}$ is $[\underline{\lambda}_{i},\bar{\lambda}_{i}]$, i.e., for all $\varepsilon>0$ there exists $K(\varepsilon)>0$ such that
$$\frac{1}{K(\varepsilon)}exp((\underline{\lambda}_{i}-\varepsilon)(t-s))\leq\Lambda_{i}(t,s)\leq K(\varepsilon)exp((\bar{\lambda}_{i}+\varepsilon)(t-s))$$
where $\Lambda_{i}(t,s)$ denotes the evolution operator of the equation $\dot{x_{i}}=a_{i}(t)x_{i}$.
\end{enumerate}
\end{thm}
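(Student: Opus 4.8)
The plan is to obtain this statement as a specialization of the normal form theorem (Theorem~\ref{th2}). First I would verify that the three hypotheses here are precisely the hypotheses of Theorem~\ref{th2}: condition (1) gives the $C^{k}$ Carath\'{e}odory regularity on a tubular neighbourhood $B_{r}(\mu)$, condition (2) supplies bounded growth together with the splitting of the dichotomy spectrum into compact intervals $\lambda_{i}=[\underline{\lambda}_{i},\bar{\lambda}_{i}]$, and condition (3) is a (stronger, all-directions) version of the uniform boundedness of the higher-order $x$-derivatives of $f$ required in Theorem~\ref{th2}. Hence Theorem~\ref{th2} applies and yields a local $C^{k}$ equivalence transforming system (6) into a normal form $\dot{x}=g(t,x)$ with the zero solution, whose linearization $\dot{x}=D_{x}g(t,0)x$ has the same dichotomy spectrum as the linearization of (6) along $\mu$ and is block-diagonalized, each block corresponding to a spectral interval $\lambda_{i}$.

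The key structural observation is that the state dimension here equals the number of spectral intervals. Since by Theorem~\ref{th1} the spectral manifolds satisfy $\mathcal{W}_{0}\oplus\cdots\oplus\mathcal{W}_{n+1}=\mathbb{R}\times\mathbb{R}^{n}$ with $\mathcal{W}_{0},\mathcal{W}_{n+1}$ trivial, and there remain $n$ nontrivial manifolds $\mathcal{W}_{1},\dots,\mathcal{W}_{n}$ whose fibre dimensions sum to $n$, each $\mathcal{W}_{i}$ must be one-dimensional. Consequently every block of the block-diagonal linear part is a $1\times1$ block, i.e. a scalar function $a_{i}(t)$, so $D_{x}g(t,0)=B(t)=\mathrm{diag}(a_{1}(t),\dots,a_{n}(t))$ in the notation of the Preliminaries. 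I then set $r_{1}(t,x):=g(t,x)-B(t)x$, so that system (7) holds by construction. Property (1) is immediate: $g(t,0)=0$ because the zero function is a solution, whence $r_{1}(t,0)=0$, and $D_{x}r_{1}(t,0)=D_{x}g(t,0)-B(t)=0$. Property (3) follows by applying Theorem~\ref{th1} to each scalar equation $\dot{x}_{i}=a_{i}(t)x_{i}$, whose evolution operator is $\Lambda_{i}(t,s)=\exp\!\big(\int_{s}^{t}a_{i}(\tau)\,d\tau\big)$: since the $i$-th block inherits the spectral interval $[\underline{\lambda}_{i},\bar{\lambda}_{i}]$, the two bounds of Theorem~\ref{th1} (the second applied to the pair $(s,t)$ and inverted, using positivity of the scalar flow) give exactly the stated upper and lower estimates.

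The remaining and most delicate point is Property (2), the \emph{global} boundedness of all partial derivatives of $r_{1}$. Theorem~\ref{th2} only produces $g$ on a neighbourhood $U_{q}(0)$, so a priori $r_{1}$ is defined and controlled only near the zero solution. To upgrade this to a globally defined map with globally bounded derivatives of every order, I would compose $r_{1}$ with a Nelson-type cutoff in the $x$-variable (as alluded to in the Introduction), replacing $r_{1}(t,x)$ by $\chi(\|x\|/\delta)\,r_{1}(t,x)$, where $\chi$ is a fixed smooth bump equal to $1$ near the origin and supported in a ball inside $U_{q}(0)$ and $\delta$ is small. Because a local $C^{k}$ equivalence only constrains the dynamics on a tubular neighbourhood of the reference solution, this modification leaves the local equivalence class unchanged; it preserves Properties (1) and (3) since $\chi\equiv1$ near $x=0$ leaves $r_{1}$ and its first $x$-derivative at the origin untouched and does not alter $B(t)$; and the compact-in-$x$ support of the cutoff confines all derivatives to a region where, by condition (3) carried through the transformation, they are uniformly bounded in $t$.

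I expect the main obstacle to lie precisely in this last transfer of bounds: one must check that the coordinate change from Theorem~\ref{th2} and its inverse have $x$-derivatives of all orders that are bounded uniformly in $t$, so that the uniform bounds on the derivatives of $f$ on $B_{r}(\mu)$ provided by hypothesis (3) survive as uniform bounds on the derivatives of $g$, and hence of $r_{1}$, on the cutoff region. Once this bookkeeping is in place, the three asserted properties of $r_{1}$ follow directly and the proof is complete.
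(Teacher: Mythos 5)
Your derivation is correct and takes essentially the route the paper intends: the paper offers no proof of its own for this statement (it is prefaced only by ``Now it is easy to get the following theorem'' and a citation to the planar paper \cite{article9}), and your argument --- apply Theorem~\ref{th2}, observe that $n$ disjoint spectral intervals in dimension $n$ force every block of the block-diagonalized linear part to be scalar, set $r_{1}=g-B(t)x$, and secure the global derivative bounds with a Nelson-type cutoff --- is exactly that implicit specialization written out. The only delicate point is the one you already flag, namely carrying the uniform-in-$t$ derivative bounds of $f$ through the equivalence onto the cutoff region, which is the standard bookkeeping from \cite{article7} and \cite{article9}.
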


\section{The existence of invariant manifold}
Let $J\subset(-\infty,\infty)$ be an interval. Let
$$C_{\rho}(J,\mathbb{R}^{n})=\{f:J\rightarrow\mathbb{R}^{n}~is~continuous~and~sup_{t\in J} e^{-\rho t}\parallel f(t)\parallel<\infty\},$$
for $\rho\in\mathbb{R}$.\\
In Zhang's paper \cite{article12}, which is detailed and well thought out on the existence of invariant manifold for nonautonomous differential equations. The following theorem is similar to Zhang's result.
\begin{thm} 
Consider system (7) satisfying the properties in the theorem \ref{th3}. For arbitrarily given constant $\rho\in(-\alpha, \alpha)$ satisfying that
$$ K(\epsilon)(\frac{1}{\alpha-\rho}+\frac{1}{\alpha+\rho})sup_{(t,x)\in\mathbb{R}\times\mathbb{R}^{n}}\parallel\frac{\partial r_{1}}{\partial x}(t,x)\parallel<1,$$
then the system (7) has global invariant manifolds
$\mathcal{M}_{\rho}^{+}=\{(\tau,x_{0})\in\mathbb{R}\times\mathbb{R}^{n}|x(t;\tau,x_{0}),~a~solution~of~(7)~passing~x_{0}~at~\tau,~is~in~C_{\rho}((-\infty,\tau),\mathbb{R}^{n})\}$ and            $\mathcal{M}_{\rho}^{-}=\{(\tau,x_{0})\in\mathbb{R}\times\mathbb{R}^{n}|x(t;\tau,x_{0}),~a~solution~of~(7)~passing~x_{0}~at~\tau,~is~in~C_{\rho}((\tau,+\infty),\mathbb{R}^{n})\}$. What's more
$$\mathcal{M}_{\rho}^{+}=\{(\tau,x_{0})\in\mathbb{R}\times\mathbb{R}^{n}|x_{0}=\xi+h(\tau,\xi),\xi\in\mathcal{R}(I-P(\tau))\},$$ and $$\mathcal{M}_{\rho}^{-}=\{(\tau,x_{0})\in\mathbb{R}\times\mathbb{R}^{n}|x_{0}=\xi+g(\tau,\xi),\xi\in\mathcal{R}P(\tau)\},$$ where $\mathcal{R}P(\tau)$ denotes the range of $P(\tau)$, $h(\tau,\cdot):\mathcal{R}(I-P(\tau))\rightarrow\mathcal{R}P(\tau)$ and $g(\tau,\cdot):\mathcal{R}P(\tau)\rightarrow\mathcal{R}(I-P(\tau))$ are continuous in $\tau$, and $h(\tau,0)\equiv0,g(\tau,0)\equiv0$.\\
\end{thm}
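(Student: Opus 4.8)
The plan is to build each manifold by the Lyapunov--Perron method, realizing it as the graph of the fixed point of an integral operator on the weighted space $C_{\rho}$. I will carry out the construction for the stable-type manifold $\mathcal{M}_{\rho}^{-}$ in detail; the manifold $\mathcal{M}_{\rho}^{+}$ is obtained by the time-reversed argument, interchanging the roles of $P$ and $I-P$. Let $\Phi(t,s)$ be the evolution operator of the linear part $\dot{x}=B(t)x$ and let $P(t)$ be the invariant projector furnished by the exponential dichotomy, so that $\|\Phi(t,s)P(s)\|\leq K(\varepsilon)e^{-\alpha(t-s)}$ for $t\geq s$ and $\|\Phi(t,s)(I-P(s))\|\leq K(\varepsilon)e^{\alpha(t-s)}$ for $t\leq s$. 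Fixing $\tau\in\mathbb{R}$ and $\xi\in\mathcal{R}P(\tau)$, I would seek a solution of (7) on $(\tau,+\infty)$ lying in $C_{\rho}$ by applying variation of constants and splitting with $P$, which yields the Lyapunov--Perron equation
\begin{equation*}
x(t)=\Phi(t,\tau)\xi+\int_{\tau}^{t}\Phi(t,s)P(s)r_{1}(s,x(s))\,ds-\int_{t}^{+\infty}\Phi(t,s)(I-P(s))r_{1}(s,x(s))\,ds.
\end{equation*}
The first integral runs over the decaying directions, the second over the growing directions pushed back from $+\infty$; the improper integral converges precisely because $\rho<\alpha$. I denote the right-hand side by $(T_{\tau,\xi}x)(t)$.

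The core estimate is that $T_{\tau,\xi}$ maps $C_{\rho}((\tau,+\infty),\mathbb{R}^{n})$ into itself and is a contraction. Writing $\|x\|_{\rho}=\sup_{t>\tau}e^{-\rho t}\|x(t)\|$ and $L:=\sup_{(t,x)}\|\frac{\partial r_{1}}{\partial x}(t,x)\|$, which is finite by the global boundedness of the derivatives of $r_{1}$ asserted in Theorem \ref{th3}, a direct computation of the two integrals against the weight $e^{-\rho t}$ produces the factor $\frac{1}{\alpha+\rho}$ from the stable integral and $\frac{1}{\alpha-\rho}$ from the unstable one, so that
\begin{equation*}
\|T_{\tau,\xi}x-T_{\tau,\xi}y\|_{\rho}\leq K(\varepsilon)\Bigl(\tfrac{1}{\alpha-\rho}+\tfrac{1}{\alpha+\rho}\Bigr)L\,\|x-y\|_{\rho}.
\end{equation*}
The hypothesis of the theorem is exactly that this factor is strictly less than $1$; since $r_{1}(t,0)=0$ forces $T_{\tau,\xi}0=\Phi(\cdot,\tau)\xi\in C_{\rho}$, the operator also maps the space into itself, and the Banach fixed point theorem produces a unique $x(\cdot\,;\tau,\xi)\in C_{\rho}$. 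Evaluating the fixed point at $t=\tau$ and using $\Phi(\tau,s)(I-P(s))=(I-P(\tau))\Phi(\tau,s)$ gives $P(\tau)x(\tau)=\xi$ and $(I-P(\tau))x(\tau)=-\int_{\tau}^{+\infty}\Phi(\tau,s)(I-P(s))r_{1}(s,x(s))\,ds=:g(\tau,\xi)$, which is the graph representation $x_{0}=\xi+g(\tau,\xi)$. Taking $\xi=0$ gives the zero fixed point, hence $g(\tau,0)\equiv 0$, and continuity of $g$ in $(\tau,\xi)$ follows from the continuous dependence of a contraction's fixed point on parameters, the $\tau$-dependence entering through $\Phi(\cdot,\tau)$ and the lower limit of integration.

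The step I expect to be the main obstacle is the invariance of $\mathcal{M}_{\rho}^{-}$, that is, matching the analytic characterization (fixed point of $T_{\tau,\xi}$) with the dynamical one (the solution of (7) through $(\tau,x_{0})$ lies in $C_{\rho}((\tau,+\infty),\mathbb{R}^{n})$). One direction is immediate: differentiating the integral equation shows the fixed point solves (7) and its membership in $C_{\rho}$ is built into the construction. The converse, that \emph{every} $C_{\rho}$-solution satisfies the integral equation, requires showing that the unstable component is forced to equal the improper integral, which uses the dichotomy decay of $\Phi(t,s)(I-P(s))$ to annihilate the boundary contribution at $+\infty$; this is the genuinely delicate point. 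Invariance then follows from the evolution (cocycle) property of $\Phi$ together with the uniqueness of the fixed point, so that $x(t;\tau,x_{0})$ started from a point of $\mathcal{M}_{\rho}^{-}$ again lies on $\mathcal{M}_{\rho}^{-}$ at every later time. Finally, $\mathcal{M}_{\rho}^{+}$ is constructed identically after reversing time, with the convergent improper integral placed at $-\infty$, the parameter $\xi$ ranging over $\mathcal{R}(I-P(\tau))$, and the graph map $h(\tau,\cdot):\mathcal{R}(I-P(\tau))\rightarrow\mathcal{R}P(\tau)$ read off from the projected fixed point in the same way.
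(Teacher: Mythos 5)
Your Lyapunov--Perron construction is correct and is the standard proof of this kind of statement, but it is not the route the paper takes: the paper's entire proof consists of observing that $\dot{x}=B(t)x$ admits an exponential dichotomy and that the partial derivatives of $r_{1}$ are globally bounded, and then invoking Theorem 3.1 of Zhang's paper on generalized exponential dichotomies and invariant manifolds, which already contains the fixed-point argument you carry out by hand. So you have essentially re-proved the cited result rather than paralleled the paper's (two-line) argument. What your version buys is transparency: it shows exactly where the hypothesis $K(\epsilon)\bigl(\frac{1}{\alpha-\rho}+\frac{1}{\alpha+\rho}\bigr)\sup\|\frac{\partial r_{1}}{\partial x}\|<1$ enters, namely as the contraction constant of the integral operator, with $\frac{1}{\alpha+\rho}$ coming from the integral over the range of $P$ and $\frac{1}{\alpha-\rho}$ from the tail integral over the range of $I-P$, and it makes the graph map explicit as $g(\tau,\xi)=-\int_{\tau}^{+\infty}\Phi(\tau,s)(I-P(s))r_{1}(s,x(s))\,ds$ evaluated at the fixed point (similarly for $h$ after time reversal). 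What the paper's citation buys is brevity and the full strength of Zhang's theorem, including the characterization of $\mathcal{M}_{\rho}^{\pm}$ as the set of \emph{all} $C_{\rho}$-solutions, which is exactly the converse direction you flag as delicate. If your argument is to stand alone you should still write out that converse --- for a solution $x\in C_{\rho}((\tau,+\infty),\mathbb{R}^{n})$ one has $\|\Phi(t,T)(I-P(T))x(T)\|\leq K(\varepsilon)e^{\alpha t}e^{(\rho-\alpha)T}\|x\|_{\rho}\rightarrow 0$ as $T\rightarrow+\infty$ since $\rho<\alpha$, which forces the unstable component to equal the improper integral --- together with the continuity of $g$ and $h$ in $\tau$ via the uniform contraction principle; both steps are routine given your setup, so I see no genuine gap, only work left implicit.
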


\begin{proof}
 For $\dot{x}=B(t)x$ has $\textbf{ED}$ and the partial derivatives of $r_{1}$ up to any finite order are globally bounded. we can see that our conditions satisfy the condition of theorem 3.1 in \cite{article12}, So the result is hold.
\end{proof}

According to above theorems, we start considering the following system
\begin{equation}
\dot{x}=B(t)x+r_{2}(t,x)
\end{equation}
with the zero solution, where $r_{2}:\mathbb{R}\times\mathbb{R}^{n}\rightarrow\mathbb{R}^{n}$ is a $C^{k}$ Carath\'{e}odory function satisfying the following properties:\\
\begin{enumerate}
\item $r_{2}(t,0)=0$ and $D_{x}r_{2}(t,0)=0$.\\
\item The partial derivatives of $r_{2}$ up to $k$ are globally bounded.\\
\item The set $\underbrace{\mathbb{R}\times\cdots\times\mathbb{R}}_{n-1}\times\{0\}$,$\underbrace{\mathbb{R}\times\cdots\times\mathbb{R}}_{n-2}\times\{0\}\times\{0\}$,$\cdots$,$\mathbb{R}\times\underbrace{\{0\}\times\cdots\times\{0\}}_{n-1}$
 are invariant manifold.
\end{enumerate}

\section{Estimate on higher derivatives of solutions}
 We have proved that under a suitable spectral gap condition, a nonautonomous differential equation together with a reference solution can be transformed by $C^{k}$ equivalence into a nonautonomous differential equation with zero solution which satisfies that the linear part of the linearization is diagonal and the set $\underbrace{\mathbb{R}\times\cdots\times\mathbb{R}}_{n-1}\times\{0\}$,$\underbrace{\mathbb{R}\times\cdots\times\mathbb{R}}_{n-2}\times\{0\}\times\{0\}$,$\cdots$,$\mathbb{R}\times\underbrace{\{0\}\times\cdots\times\{0\}}_{n-1}$ are invariant manifold.\\
Let $p_{i}:\mathbb{R}\times\mathbb{R}^{n-1}\rightarrow\mathbb{R}$ be $C^{k}$ Carath\'{e}odory functions satisfies that all partial derivatives of $p_{i}$ with respect to $x$ are globally bounded, and for $t\in\mathbb{R}$
$$\| p_{i}(t,x_{1},\cdots,x_{n-1})\|\leq\delta,~~1\leq i \leq n;$$
$$\|\begin{pmatrix}
\frac{\partial p_{1}}{\partial x_{1}}&\frac{\partial p_{1}}{\partial x_{2}}&\cdots&\frac{\partial p_{1}}{\partial x_{n-1}}\\
\frac{\partial p_{2}}{\partial x_{1}}&\frac{\partial p_{2}}{\partial x_{2}}&\cdots&\frac{\partial p_{2}}{\partial x_{n-1}}\\
\cdots&\cdots&\cdots&\cdots\\
\frac{\partial p_{n-1}}{\partial x_{1}}&\frac{\partial p_{n-1}}{\partial x_{2}}&\cdots&\frac{\partial p_{n-1}}{\partial x_{n-1}}&
\end{pmatrix}\|\leq\frac{\delta}{2K(\varepsilon)}.$$
where $\varepsilon,\delta$ are arbitrary positive numbers and $K(\varepsilon)$ is chosen such that Theorem \ref{th1} holds. Additionally, in the case $\bar{\lambda}_{1}<0$ the positive constants $\varepsilon,\delta$ are assumed to satisfy $\bar{\lambda}_{1}+\varepsilon+\delta<0$.\\
Consider the following partially linear system

  \begin{equation}
    \left\{
    \begin{array}{l}
    \dot{x}_{1}=a_{1}(t)x_{1}+p_{1}(t,x_{1},\cdots,x_{n-1})\\
    ~~~~~~~~~~~\cdots\\
    \dot{x}_{n-1}=a_{n-1}(t)x_{n-1}+p_{n-1}(t,x_{1},\cdots,x_{n-1})\\
    \dot{x}_{n}=a_{n}(t)x_{n}+p_{n}(t,x_{n},\cdots,x_{n-1})x_{n}\\
    \end{array}
    \right.
  \end{equation}

Let $\phi(\cdot,s,x_{1},\cdots,x_{n})$ denote the unique solution of (9) which satisfies that $x(s)=(x_{1},\cdots,x_{n})^{T}$. In the following lemma, we state some properties of $\phi=(\phi_{1},\cdots,\phi_{n})$.\\

\begin{lem}
\label{lemma1}
The following statements hold£º\\
    \begin{enumerate}
   \item $\phi_{j}(t,s,x_{1},\cdots,x_{n}),1\leq j\leq n-1$ are independent of $x_{n}$.\\
    \item $\phi_{n}(t,s,x_{1},\cdots,x_{n})=\Lambda_{n}(t,s)x_{n}e^{\int_{s}^{t}p_{n}(u,\phi_{1}(u,s,x),\cdots,\phi_{n-1}(u,s,x))du}.$
    \end{enumerate}
\end{lem}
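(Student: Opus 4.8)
The plan is to exploit the triangular (cascade) structure of system (9): the first $n-1$ equations form a closed subsystem in the variables $x_1,\dots,x_{n-1}$ that does not involve $x_n$, while the last equation is scalar and linear in $x_n$ once the other components are known.

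First I would prove statement 1. Consider the reduced system on $\mathbb{R}^{n-1}$,
\begin{equation*}
\dot{y}_i = a_i(t)y_i + p_i(t,y_1,\dots,y_{n-1}),\qquad 1\le i\le n-1,
\end{equation*}
and let $\psi(\cdot,s,x_1,\dots,x_{n-1})$ denote its unique solution with $y(s)=(x_1,\dots,x_{n-1})^T$. Uniqueness is available here because the partial derivatives of the $p_i$ are globally bounded, so each $p_i$ is Lipschitz in the spatial variables, which together with the local integrability of the $a_i$ places us in the uniqueness regime for Carath\'{e}odory equations. Appending to $\psi$ the scalar equation for the last coordinate produces a solution of the full system (9) whose first $n-1$ components equal $\psi$; by uniqueness for (9) these components must coincide with $\phi_1,\dots,\phi_{n-1}$. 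Since $\psi$ was constructed using only the data $(x_1,\dots,x_{n-1})$, the functions $\phi_j$, $1\le j\le n-1$, are independent of $x_n$, which is statement 1.

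For statement 2 I would substitute the now-determined components $\phi_1,\dots,\phi_{n-1}$ into the last equation. Writing $x=(x_1,\dots,x_n)$ and $b(u)=a_n(u)+p_n\bigl(u,\phi_1(u,s,x),\dots,\phi_{n-1}(u,s,x)\bigr)$, the equation for $\phi_n$ collapses to the scalar linear homogeneous equation $\dot{\phi}_n = b(t)\phi_n$ with $\phi_n(s)=x_n$. Solving by the integrating factor gives
\begin{equation*}
\phi_n(t,s,x)=x_n\exp\!\left(\int_s^t b(u)\,du\right)=\Lambda_n(t,s)\,x_n\,\exp\!\left(\int_s^t p_n\bigl(u,\phi_1(u,s,x),\dots,\phi_{n-1}(u,s,x)\bigr)\,du\right),
\end{equation*}
where in the last step I split off the factor $\Lambda_n(t,s)=\exp(\int_s^t a_n(u)\,du)$, the evolution operator of $\dot{x}_n=a_n(t)x_n$. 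This is exactly the claimed formula.

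The computations are routine; the only point demanding care is the Carath\'{e}odory framework, in which solutions are absolutely continuous and the equations hold only for almost all $t$. The main obstacle is therefore to argue the uniqueness step cleanly in this setting — ensuring that the \emph{append-and-compare} argument for statement 1 is legitimate, and that the integrating-factor computation for statement 2 is valid for an integrand $b$ that is merely locally integrable in $t$ rather than continuous.
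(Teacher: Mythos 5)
Your proposal is correct and follows essentially the same route as the paper: part 1 from the cascade structure of (9), and part 2 by solving the resulting scalar linear equation for $x_n$. The only difference is cosmetic — the paper divides the last equation by $x_n$ before integrating, whereas your integrating-factor formulation avoids that division and so also covers the case $x_n=0$ cleanly.
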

\begin{proof}
The first one is obvious. Let me prove the second one. we can change the following equation
 $$\dot{x}_{n}=a_{n}(t)x_{n}+p_{n}(t,x_{n},\cdots,x_{n-1})x_{n},$$
 into
 $$\frac{\dot{x}_{n}}{{x}_{n}}=a_{n}(t)+p_{n}(t,x_{n},\cdots,x_{n-1}).$$
 Because $\Lambda_{n}(t,s)=e^{\int_{s}^{t}a_{n}(u)du}$, we get
 $${x}_{n}(t)=\Lambda_{n}(t,s)x_{n}e^{\int_{s}^{t}p_{n}(u,\phi_{1}(u,s,x),\cdots,\phi_{n-1}(u,s,x)du},$$
 then completes the proof of Lemma \ref{lemma1}.
 \end{proof}
\begin{lem}
\label{lemma2}
There exists a positive constant $L$ such that for any $1\leq i_{1}+\cdots+i_{n-1}\leq k$ the following statements hold for all
$t\geq s$, $j=1,\cdots,n-1$ and $x\in \mathbb{R}^{n}$: \\
\begin{enumerate}
    \item If $\bar{\lambda}_{1}\geq 0$, we have $\|\frac{\partial^{i_{1}+\cdots+i_{n-1}} \phi_{j}}{\partial^{i_{1}} x_{1}\cdots\partial^{i_{n-1}}x_{n-1}}\|\leq Le^{(i_{1}+\cdots+i_{n-1})(\bar{\lambda}_{1}+\varepsilon+\delta)(t-s)},$  \\
   \item If $\bar{\lambda}_{1}<0$, we have $\|\frac{\partial^{i_{1}+\cdots+i_{n-1}} \phi_{j}}{\partial^{i_{1}} x_{1}\cdots\partial^{i_{n-1}}x_{n-1}}\|\leq Le^{(\bar{\lambda}_{1}+\varepsilon+\delta)(t-s)}$.  \\
    \end{enumerate}
\end{lem}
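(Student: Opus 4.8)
The plan is to work entirely inside the closed subsystem formed by the first $n-1$ equations of (9). By Lemma \ref{lemma1} the components $\phi_1,\dots,\phi_{n-1}$ do not depend on $x_n$, so, writing $\psi=(\phi_1,\dots,\phi_{n-1})$, $A(t)=\operatorname{diag}(a_1(t),\dots,a_{n-1}(t))$ and $p=(p_1,\dots,p_{n-1})$, the map $\xi\mapsto\psi(t,s,\xi)$ is precisely the flow of the $(n-1)$-dimensional equation $\dot y=A(t)y+p(t,y)$. Every derivative appearing in the statement is then a derivative of this flow with respect to its initial value, and since $p$ is $C^{k}$ with globally bounded $x$-derivatives the flow is $C^{k}$ in $\xi$ and its derivatives solve the usual variational equations. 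I would prove the estimate by induction on the order $N:=i_1+\cdots+i_{n-1}$, using throughout the spectral bound $\|\Lambda(t,s)\|\le K(\varepsilon)e^{(\bar{\lambda}_1+\varepsilon)(t-s)}$ for $t\ge s$, where $\Lambda=\operatorname{diag}(\Lambda_1,\dots,\Lambda_{n-1})$; this follows from Theorem \ref{th3}(3) together with $\bar{\lambda}_i\le\bar{\lambda}_1$.

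For the base case $N=1$ the Jacobian $D\psi$ solves $\dot Z=[A(t)+D_yp(t,\psi)]Z$ with $Z(s)=I$, so by variation of constants against $\Lambda$ one has $D\psi(t)=\Lambda(t,s)+\int_s^t\Lambda(t,u)\,D_yp(u,\psi(u))\,D\psi(u)\,du$. Feeding in the standing bound $\|D_yp\|\le\delta/(2K(\varepsilon))$ and applying Gronwall's inequality to $e^{-(\bar{\lambda}_1+\varepsilon)(t-s)}\|D\psi(t)\|$ gives $\|D\psi(t)\|\le K(\varepsilon)e^{\sigma(t-s)}$ with $\sigma:=\bar{\lambda}_1+\varepsilon+\tfrac{\delta}{2}$. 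Writing $\Psi(t,s)$ for the evolution operator of the variational equation, this reads $\|\Psi(t,s)\|\le K(\varepsilon)e^{\sigma(t-s)}$ for $t\ge s$. Since $\sigma<\bar{\lambda}_1+\varepsilon+\delta$, the bound for $N=1$ is already contained in both cases of the lemma.

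For the inductive step I would differentiate the flow equation $N\ge2$ times. For a multi-index $\alpha$ with $|\alpha|=N$, $\partial^\alpha\psi$ solves the same homogeneous variational equation with an inhomogeneity, $\tfrac{d}{dt}\partial^\alpha\psi=[A(t)+D_yp(t,\psi)]\,\partial^\alpha\psi+R_\alpha(t)$, where the generalized Leibniz (Fa\`a di Bruno) rule produces $R_\alpha$ as a finite sum of bounded higher derivatives $D_y^{m} p$ $(2\le m\le N)$ multiplied by products $\partial^{\beta_1}\psi\cdots\partial^{\beta_m}\psi$ with $\beta_1+\cdots+\beta_m=\alpha$ and each $|\beta_l|\ge1$, hence $|\beta_l|<N$. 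Because $\psi(s,s,\cdot)=\mathrm{id}$, one has $\partial^\alpha\psi(s)=0$ for $N\ge2$, so variation of constants against the \emph{full} variational evolution gives $\partial^\alpha\psi(t)=\int_s^t\Psi(t,u)R_\alpha(u)\,du$; using $\Psi$ rather than $\Lambda$ here is essential, as it absorbs the homogeneous drift exactly and avoids a spurious Gronwall factor. It then remains to bound $R_\alpha$ by the induction hypothesis and integrate against $\|\Psi(t,u)\|\le K(\varepsilon)e^{\sigma(t-u)}$.

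The decisive bookkeeping is the growth or decay rate of $R_\alpha$, and this is exactly where the two cases part. If $\bar{\lambda}_1\ge0$ I would carry the hypothesis $\|\partial^\beta\psi\|\le L\,e^{|\beta|(\bar{\lambda}_1+\varepsilon+\delta)(t-s)}$; then each product has rate $\sum_l|\beta_l|(\bar{\lambda}_1+\varepsilon+\delta)=N(\bar{\lambda}_1+\varepsilon+\delta)$ independently of the partition, and since $N(\bar{\lambda}_1+\varepsilon+\delta)>\sigma$ the integral $\int_s^t e^{\sigma(t-u)}e^{N(\bar{\lambda}_1+\varepsilon+\delta)(u-s)}\,du$ reproduces $e^{N(\bar{\lambda}_1+\varepsilon+\delta)(t-s)}$ up to a constant, closing the induction. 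If $\bar{\lambda}_1<0$, the standing assumption $\bar{\lambda}_1+\varepsilon+\delta<0$ makes $\sigma<0$, and I would carry the single-rate hypothesis $\|\partial^\beta\psi\|\le L\,e^{(\bar{\lambda}_1+\varepsilon+\delta)(t-s)}$; a product of $m\ge2$ such factors decays at rate $m(\bar{\lambda}_1+\varepsilon+\delta)\le2(\bar{\lambda}_1+\varepsilon+\delta)$, strictly faster than the target, so after integrating against $\Psi$ the slowest surviving exponential still has rate $\max\{\sigma,\,2(\bar{\lambda}_1+\varepsilon+\delta)\}\le\bar{\lambda}_1+\varepsilon+\delta$, which is the claim. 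In each case $L$ is enlarged finitely many times (once per order up to $k$), so a single $L$ serves for all $1\le N\le k$. I expect the main obstacle to be precisely this rate bookkeeping: organizing the Fa\`a di Bruno expansion of $R_\alpha$ and checking that the exponents balance as stated, including the role of the factor $\tfrac12$ in $\|D_yp\|\le\delta/(2K(\varepsilon))$ and the sign split on $\bar{\lambda}_1$ that decides whether the product of lower-order derivatives helps (the decaying case) or must be summed additively (the growing case).
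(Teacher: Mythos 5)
Your proposal is correct and follows essentially the same route as the paper: induction on the total order of differentiation, variation of constants against the diagonal evolution $\Lambda$, the standing bound $\|D_{x}p\|\le\delta/(2K(\varepsilon))$ together with Gronwall for the first-order case, and a Fa\`a di Bruno remainder of strictly lower order controlled by the induction hypothesis. The only cosmetic differences are that the paper runs the induction on the full $n$-dimensional system $\dot x=B(t)x+r(t,x)$ (carrying an extra $-\tfrac{\delta}{2}$ of slack in the exponent and then specializing to $\phi_{1},\dots,\phi_{n-1}$), and absorbs the first-order term in the inductive step by a second application of Gronwall rather than by integrating against the variational evolution operator $\Psi$ as you do.
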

\begin{proof}
We prove by induction on $i_{1}+\cdots+i_{n}=m$ the following inequality for $x\in\mathbb{R}^{n}$
\[
\|\frac{\partial^{i_{1}+\cdots+i_{n}}\phi^{'}}{\partial^{i_{1}} x_{1}\cdots\partial^{i_{n}}x_{n}}\|\leq Le^{((i_{1}+\cdots+i_{n})(\bar{\lambda}_{1}+\varepsilon+\delta)-\frac{\delta}{2})(t-s)}~~\text{for}~~t\geq s.
\]
where $\phi^{'}$ is the solution of the following system
\[
\dot{x}=B(t)x+r(t,x),
\] with the initial condition $x(s)=x$ and $\|D_{x}r(u,\phi^{'}(u,s,x))\|\leq\frac{\delta}{2K(\varepsilon)}$.\\
Let $\Lambda(t,s)x$ denote the solution of the following system
\[
\dot{x}=B(t)x,
\] with the initial condition $x(s)=x$, we have
\[
\phi^{'}(t,s,x)=\Lambda(t,s)x+\int_{s}^{t}\Lambda(t,u)r(u,\phi^{'}(u,s,x))du,
\]\\
then
\[
D_{x}\phi^{'}(t,s,x)=\Lambda(t,s)+\int_{s}^{t}\Lambda(t,u)D_{x}r(u,\phi^{'}(u,s,x))D_{x}\phi^{'}(u,s,x)du.
\]
So
\[
\|D_{x}\phi^{'}(t,s,x)\|\leq K(\varepsilon)e^{(\bar{\lambda}_{1}+\varepsilon)(t-s)}+\frac{\delta}{2}\int_{s}^{t}e^{(\bar{\lambda}_{1}+\varepsilon)(t-u)}\|D_{x}\phi^{'}(u,s,x)\|du,
\]
by Gronwall's inequality yields
\[
\|e^{(\bar{\lambda}_{1}+\varepsilon)(s-t)}D_{x}\phi^{'}(t,s,x)\|\leq K(\varepsilon)e^{\frac{\delta}{2}(t-s)}~~for~~t\geq s.
\]
 so, the inequality holds for $m=1$. Suppose that the assertion holds for $m-1$.
 when $i_{1}+\cdots+i_{n}=m$,
 \[
 \frac{\partial^{m}r(u,\phi^{'}(u,s,x))}{\partial^{i_{1}} x_{1}\cdots\partial^{i_{n}}x_{n}}=D_{x}r(u,\phi^{'}(u,s,x))\frac{\partial^{m}\phi^{'}(u,s,x)}{\partial^{i_{1}} x_{1}\cdots\partial^{i_{n}}x_{n}}+R(u,s,x),
 \]
 where $\|R(u,s,x)\|\leq Ce^{(m(\bar{\lambda}_{1}+\varepsilon+\delta)-\delta)(u-s)}$, $C$ is a constant.\\
 So
 \[
 \frac{\partial^{m}\phi^{'}(t,s,x)}{\partial^{i_{1}} x_{1}\cdots\partial^{i_{n}}x_{n}}=\int_{s}^{t}\Lambda(t,u)(D_{x}r(u,\phi^{'}(u,s,x))\frac{\partial^{m}\phi^{'}(u,s,x)}{\partial^{i_{1}} x_{1}\cdots\partial^{i_{n}}x_{n}}+R(u,s,x))du.
 \]
 Consequently,
 \[
 \begin{split}
 \|\frac{\partial^{m}\phi^{'}(t,s,x)}{\partial^{i_{1}} x_{1}\cdots\partial^{i_{n}}x_{n}}\|\leq&\frac{\delta}{2}\int_{s}^{t}e^{(\bar{\lambda}_{1}+\varepsilon)(t-u)}\|\frac{\partial^{m}\phi^{'}(u,s,x)}{\partial^{i_{1}} x_{1}\cdots\partial^{i_{n}}x_{n}}\|du\\
 &+CK(\varepsilon)\int_{s}^{t}e^{(\bar{\lambda}_{1}+\varepsilon)(t-u)}e^{(m(\bar{\lambda}_{1}+\varepsilon+\delta)-\delta)(u-s)}du,
 \end{split}
 \]
 by Gronwall's inequality yields
 \[
 \|\frac{\partial^{m}\phi^{'}(t,s,x)}{\partial^{i_{1}} x_{1}\cdots\partial^{i_{n}}x_{n}}\|\leq Le^{(m(\bar{\lambda}_{1}+\varepsilon+\delta)-\frac{\delta}{2})(t-s)}.
 \]
 So the assertion is proved, Lemma \ref{lemma2} is a special case of the assertion. Now, we complete the prove of Lemma \ref{lemma2}.
\end{proof}

According to Lemmma\ref{lemma1} and Lemma\ref{lemma2}, we can get the following lemma:
\begin{lem}
 There exists a positive constant $L$ such that the following statements hold for all $t\geq s$ and $x\in \mathbb{R}^{n}$:\\
 \begin{enumerate}
 \item If $\bar{\lambda}_{1}>0$, we have \\
  $$\parallel\frac{\partial^{i} \phi_{n}}{\partial^{i_{1}} x_{1}\cdots\partial^{i_{n-1}}x_{n-1}}\parallel\leq Le^{((\bar{\lambda}_{n}+\varepsilon+\delta)+i(\bar{\lambda}_{1}+\varepsilon+\delta))(t-s)}|x_{n}|,$$
 for $0\leq i_{1}+\cdots+i_{n-1}=i\leq N$,
 $$\parallel\frac{\partial^{i+1} \phi_{n}}{\partial^{i_{1}} x_{1}\cdots\partial^{i_{n-1}}x_{n-1}\partial x_{n}}\parallel\leq Le^{((\bar{\lambda}_{n}+\varepsilon+\delta)+i(\bar{\lambda}_{1}+\varepsilon+\delta))(t-s)},$$
for $0\leq i_{1}+\cdots+i_{n-1}=i\leq N-1$.\\
\item If $\bar{\lambda}_{1}<0$, we have \\
 $$\parallel\frac{\partial^{i} \phi_{n}}{\partial^{i_{1}} x_{1}\cdots\partial^{i_{n-1}}x_{n-1}}\parallel\leq Le^{(\bar{\lambda}_{n}+\varepsilon+\delta)(t-s)}|x_{n}|~~for~~0\leq i_{1}+\cdots+i_{n-1}=i\leq N,$$
 $$\parallel\frac{\partial^{i+1} \phi_{n}}{\partial^{i_{1}} x_{1}\cdots\partial^{i_{n-1}}x_{n-1}\partial x_{n}}\parallel\leq Le^{(\bar{\lambda}_{n}+\varepsilon+\delta)(t-s)}~~for~~0\leq i_{1}+\cdots+i_{n-1}=i\leq N-1.$$
 \end{enumerate}
\end{lem}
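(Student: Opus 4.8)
The plan is to start from the closed-form expression for $\phi_n$ supplied by Lemma \ref{lemma1}, namely
$$\phi_n(t,s,x)=\Lambda_n(t,s)\,x_n\,e^{E(t,s,x)},\qquad E(t,s,x)=\int_s^t p_n\bigl(u,\phi_1(u,s,x),\ldots,\phi_{n-1}(u,s,x)\bigr)\,du,$$
and to exploit the fact that, by part (1) of Lemma \ref{lemma1}, $\phi_1,\ldots,\phi_{n-1}$ — and hence $E$ — are independent of $x_n$, so that $\phi_n$ depends on $x_n$ only through the linear prefactor. Consequently $\partial\phi_n/\partial x_n=\Lambda_n(t,s)e^{E}$, and every mixed derivative $\partial^{i+1}\phi_n/(\partial^{i_1}x_1\cdots\partial^{i_{n-1}}x_{n-1}\partial x_n)$ equals $\Lambda_n(t,s)\,\partial^\alpha(e^{E})$ with $\alpha=(i_1,\ldots,i_{n-1})$, whereas $\partial^i\phi_n/(\partial^{i_1}x_1\cdots\partial^{i_{n-1}}x_{n-1})=\Lambda_n(t,s)\,x_n\,\partial^\alpha(e^{E})$. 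Thus the two families of estimates are identical except for the factor $|x_n|$, and it suffices to bound $\Lambda_n(t,s)\,\partial^\alpha(e^E)$ for $0\le|\alpha|=i$.

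Second, I would record the elementary scalar bounds. From property (3) of Theorem \ref{th3} one has $|\Lambda_n(t,s)|\le K(\varepsilon)e^{(\bar\lambda_n+\varepsilon)(t-s)}$ for $t\ge s$, and from $\|p_n\|\le\delta$ one gets $|E(t,s,x)|\le\delta(t-s)$, hence $e^{E}\le e^{\delta(t-s)}$; multiplying these already yields the case $i=0$ with total exponent $\bar\lambda_n+\varepsilon+\delta$. For $i\ge1$ I would differentiate via Fa\`a di Bruno: $\partial^\alpha(e^E)=e^E\,P_\alpha$, where $P_\alpha$ is a universal polynomial whose monomials are products $\prod_m\partial^{\beta_m}E$ with $\sum_m|\beta_m|=i$. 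Each $\partial^{\beta}E=\int_s^t\partial^{\beta}\!\bigl[p_n(u,\phi_1,\ldots,\phi_{n-1})\bigr]\,du$ is again expanded by the chain rule into a sum of terms, each a bounded derivative of $p_n$ (the partial derivatives of $p_n$ in $x$ are globally bounded) times a product $\prod_l\partial^{\gamma_l}\phi_{j_l}$ with $\sum_l|\gamma_l|=|\beta|$ and each $|\gamma_l|\ge1$, to which Lemma \ref{lemma2} applies.

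Third, I would assemble the exponents. When $\bar\lambda_1>0$, Lemma \ref{lemma2}(1) gives $\|\partial^{\gamma_l}\phi_{j_l}\|\le L e^{|\gamma_l|(\bar\lambda_1+\varepsilon+\delta)(u-s)}$, so the integrand of $\partial^\beta E$ is $O\bigl(e^{|\beta|(\bar\lambda_1+\varepsilon+\delta)(u-s)}\bigr)$ and, since $\bar\lambda_1+\varepsilon+\delta>0$, integration produces $|\partial^\beta E|\le C\,e^{|\beta|(\bar\lambda_1+\varepsilon+\delta)(t-s)}$ with no loss of exponential order. Multiplying across a monomial of $P_\alpha$ and using $\sum_m|\beta_m|=i$ gives $|P_\alpha|\le C_i\,e^{i(\bar\lambda_1+\varepsilon+\delta)(t-s)}$; combining with $e^E\le e^{\delta(t-s)}$ and the $\Lambda_n$ bound yields total exponent $(\bar\lambda_n+\varepsilon)+\delta+i(\bar\lambda_1+\varepsilon+\delta)=(\bar\lambda_n+\varepsilon+\delta)+i(\bar\lambda_1+\varepsilon+\delta)$, which is statement (1). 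When $\bar\lambda_1<0$ the constants $\varepsilon,\delta$ satisfy $\bar\lambda_1+\varepsilon+\delta<0$, so Lemma \ref{lemma2}(2) bounds every $\partial^{\gamma_l}\phi_{j_l}$ by a single decaying exponential $L e^{(\bar\lambda_1+\varepsilon+\delta)(u-s)}$; because the exponent is negative the integrals $\int_s^t e^{(\bar\lambda_1+\varepsilon+\delta)(u-s)}\,du$ stay bounded, so each $\partial^\beta E$ and therefore $P_\alpha$ is bounded by a constant, and the total exponent collapses to $\bar\lambda_n+\varepsilon+\delta$, giving statement (2). Taking $L$ to be the largest of the finitely many constants arising for $0\le i\le N$ finishes the argument.

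The step I expect to be most delicate is the third: keeping honest track of the two nested Fa\`a di Bruno expansions and verifying that the exponential orders add exactly to $i(\bar\lambda_1+\varepsilon+\delta)$. The crucial mechanism is that each differentiation \emph{spends} one unit of $(\bar\lambda_1+\varepsilon+\delta)$ in the exponent, that products and the order-preserving structure of the chain rule keep the total order equal to $i$, and — most importantly — that the $u$-integration neither raises nor lowers the exponential rate: when $\bar\lambda_1+\varepsilon+\delta>0$ the integral $\int_s^t e^{c(u-s)}\,du\sim c^{-1}e^{c(t-s)}$ reproduces the same rate, while when $\bar\lambda_1+\varepsilon+\delta<0$ it is simply bounded. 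This dichotomy in the sign of $\bar\lambda_1+\varepsilon+\delta$ is exactly what separates the two cases of the lemma, and the nonvanishing of $\bar\lambda_1+\varepsilon+\delta$ is what prevents a spurious polynomial-in-$(t-s)$ factor from appearing.
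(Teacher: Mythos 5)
Your proof is correct and follows exactly the route the paper intends: the paper states this lemma without proof, asserting only that it follows from Lemma \ref{lemma1} and Lemma \ref{lemma2}, and your argument --- writing $\phi_n=\Lambda_n(t,s)x_ne^{E}$ with $E$ independent of $x_n$, then bounding $\partial^{\alpha}(e^{E})$ via Fa\`a di Bruno together with the derivative estimates of Lemma \ref{lemma2} and the sign dichotomy of $\bar{\lambda}_{1}+\varepsilon+\delta$ --- is precisely the omitted derivation. The only cosmetic remark is that case (1) of Lemma \ref{lemma2} covers $\bar{\lambda}_{1}\geq 0$, so your use of $\bar{\lambda}_{1}+\varepsilon+\delta>0$ there is justified for the boundary value $\bar{\lambda}_{1}=0$ as well.
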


\section{Partial linearization}
Denote
$$P(t,x)=\begin{pmatrix}
p_{1}(t,x_{1},\cdots,x_{n-1})\\
         \cdots\\
p_{n-1}(t,x_{1},\cdots,x_{n-1})\\
p_{n}(t,x_{1},\cdots,x_{n-1})x_{n}
\end{pmatrix},$$
$$Q(t,x)=\begin{pmatrix}
b_{1}(t,x_{1},\cdots,x_{n-1})x_{n}^{l}\\
         \cdots\\
b_{n-1}(t,x_{1},\cdots,x_{n-1})x_{n}^{l}\\
0
\end{pmatrix},$$
$$\Lambda(t,s)=\begin{pmatrix}
\Lambda_1(t,s) &               &        &\\
               &\Lambda_2(t,s) &        &\\
               &               &\ddots  &\\
               &               &        &\Lambda_n(t,s)
\end{pmatrix},$$

\subsection{\textit{Normalization up to finite order along the flat invariant manifold}}
Denote $C(m,n)$=\{ Carath\'{e}odory function $\alpha:R\times R^{n}\rightarrow R^{n}$ $\mid$ if there exist $C^{m-j}$ Carath\'{e}odory functions $\alpha_{j}(t,x_{1},\cdots,x_{n-1})$ for $j=0,\cdots,n-1$, such that the function
$\hat{\alpha}(t,x):=\alpha(t,x)-\sum_{j=0}^{n-1}\alpha_{j}(t,x_{1},\cdots,x_{n-1})x_{n}^{j}$\}

Of course any $C^{m}$ Carath\'{e}odory function $\alpha(t,x)$ is of class $C(m,n)$ with $n\leq m$ and the corresponding $\alpha_{j}$ are just derived from Taylor's theorem.\\
Then we have
\begin{equation}
\label{eq1}
\dot{x}=B(t)x+P(t,x)+Q(t,x)+r_{3}(t,x),
\end{equation}
where $b_{j}(t,x_{1},\cdots,x_{n-1})$, $1\leq j\leq n-1$ are $C^{m+1-l}$ Carath\'{e}odory functions and $r_{3}(t,x)$ belongs to $C(m+1,n)$ and $r_{3}(t,x)=O(x_{n}^{l+1})$.
\begin{equation}
\label{eq2}
\dot{x}=B(t)x+P(t,x)+r_{4}(t,x),
\end{equation}
where $r_{4}(t,x)$ belongs to $C(m,n)$ and $r_{4}(t,x)=O(x_{n}^{l+1})$.\\
\textbf{Condition 1}\\
a)$\lambda_{n}<\cdots<\lambda_{1}$, $\lambda_{n}<0$;\\
b)$l\bar{\lambda}_{n}-\underline{\lambda}_{n-1}+(m+1-l)\bar{\lambda}_{1}+(m+2l)(\varepsilon+\delta)<0$ as $ \bar{\lambda}_{1}\geq0$;\\
c)$l\bar{\lambda}_{n}-\underline{\lambda}_{n-1}+\bar{\lambda}_{1}+(m+2)(\varepsilon+\delta)<0$ as $ \bar{\lambda}_{1}<0$.\\
where $1\leq l\leq n-1$ and  $m\geq n$ are arbitrary, $\varepsilon$ and $\delta$ satisfied Theorem \ref{th3} ;
\begin{prop}
\label{p1}
If \textbf{Condition 1} hold, then system (\ref{eq1}) is $C^{m+1-l}$ equivalent to the system (\ref{eq2}).
\end{prop}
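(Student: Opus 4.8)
The plan is to kill the order-$l$ term $Q(t,x)$ by a near-identity $C^{m+1-l}$ change of variables that fixes the flat manifold $\{x_n=0\}$, preserves the zero solution, and alters $B(t)x+P(t,x)$ only by terms of order $O(x_n^{l+1})$. Mimicking Nelson's construction, I would look for an equivalence $H(t,x)=x+T(t,x)$ with
$$T(t,x)=\bigl(\tau_1(t,x_1,\dots,x_{n-1})x_n^l,\dots,\tau_{n-1}(t,x_1,\dots,x_{n-1})x_n^l,\,0\bigr)^{T},$$
so that only the first $n-1$ coordinates are touched and the correction vanishes on $\{x_n=0\}$. Imposing that $y=H(t,x)$ solves (\ref{eq2}) whenever $x$ solves (\ref{eq1}) and comparing the terms of order exactly $l$ in $x_n$ (lower orders match automatically, since $Bx$ and $P$ carry no such terms, while $D_xT\cdot Q$ and $D_xT\cdot r_3$ are of order $\ge l+1$) produces, after division by $x_n^l$, the linear transport equation
$$\partial_t\tau_j+\sum_{k=1}^{n-1}(a_kx_k+p_k)\,\partial_{x_k}\tau_j+\bigl(la_n+lp_n-a_j\bigr)\tau_j-\sum_{k=1}^{n-1}(\partial_{x_k}p_j)\tau_k=-b_j$$
for $j=1,\dots,n-1$. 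Its characteristics are exactly the trajectories of the first $n-1$ equations of (9), i.e. the flow $\bar\phi=(\phi_1,\dots,\phi_{n-1})$ of Lemma \ref{lemma1}; the $n$-th component is automatically consistent because $T_n\equiv0$, and the order-$(l+1)$ remainder generated by $H$ is precisely what is collected into $r_4\in C(m,n)$.

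I would then solve this homological equation explicitly. Setting $\Theta(u)=\tau(u,\bar\phi(u,s,\xi))$ converts it into the linear inhomogeneous system $\dot\Theta=M(u)\Theta-b$, whose coefficient matrix splits as $M=\tilde M-(la_n+lp_n)I$, where $\tilde M$ is the variational matrix of $\bar\phi$ (with fundamental solution $D_\xi\bar\phi$) and the scalar part commutes with it. Hence the fundamental solution is $\Psi(u,s)=\rho(u,s)^{-l}\,D_\xi\bar\phi(u,s,\xi)$ with $\rho(u,s)=\Lambda_n(u,s)\exp\bigl(\int_s^u p_n\bigr)=\partial_{x_n}\phi_n$ by Lemma \ref{lemma1}. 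Because $\lambda_n<0$, the factor $\rho^l$ decays as $u\to+\infty$, so the bounded solution is given by the forward integral
$$\tau_j(s,\xi)=\int_s^{+\infty}\bigl[\Psi(s,u)\,b\bigl(u,\bar\phi(u,s,\xi)\bigr)\bigr]_j\,du,$$
and differentiation under the integral sign confirms that it satisfies the transport equation.

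The crux is to show that this integral and its $\xi$-derivatives up to order $m+1-l$ converge and define a bounded $C^{m+1-l}$ Carath\'eodory function, which is exactly the role of \textbf{Condition 1}. Here $\|\Psi(s,u)\|\le\rho(u,s)^{l}\,\|D_\xi\bar\phi^{-1}\|$, where $\rho^l$ decays like $e^{l(\bar\lambda_n+\varepsilon+\delta)(u-s)}$ and, by the lower spectral bound of the first $n-1$ equations, $\|D_\xi\bar\phi^{-1}\|$ grows at worst like $e^{(-\underline\lambda_{n-1}+\varepsilon+\delta)(u-s)}$; each differentiation in $\xi$ brings down further flow derivatives, bounded by Lemma \ref{lemma2} by $e^{i(\bar\lambda_1+\varepsilon+\delta)(u-s)}$ when $\bar\lambda_1\ge0$ and by a single factor $e^{(\bar\lambda_1+\varepsilon+\delta)(u-s)}$ when $\bar\lambda_1<0$. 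Thus the governing exponent has leading part $l\bar\lambda_n-\underline\lambda_{n-1}+(m+1-l)\bar\lambda_1$ when $\bar\lambda_1\ge0$ and $l\bar\lambda_n-\underline\lambda_{n-1}+\bar\lambda_1$ when $\bar\lambda_1<0$, plus an $O(\varepsilon+\delta)$ correction, and \textbf{Condition 1}(b),(c) force it to be strictly negative. The main obstacle is exactly this exponential bookkeeping: one must track the accumulated growth of $\rho^{-l}$, of $D_\xi\bar\phi$, and of its inverse through all $m+1-l$ differentiations, and check that the $\varepsilon,\delta$ budget in \textbf{Condition 1} accommodates every factor. Once $\tau$ is bounded in $C^{m+1-l}$, the map $H=\mathrm{id}+T$ is a near-identity $C^{m+1-l}$ diffeomorphism on a neighbourhood of $\{x_n=0\}$ that fixes the zero solution and the flat invariant manifolds, leaves $B(t)x+P(t,x)$ unchanged, and carries (\ref{eq1}) into (\ref{eq2}) with $r_4\in C(m,n)$ and $r_4=O(x_n^{l+1})$, which would complete the proof.
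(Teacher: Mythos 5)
Your proposal is correct and follows essentially the same route as the paper: the change of variables $H_t(x)=(x_j+h_{t,j}(x_1,\dots,x_{n-1})x_n^l,\,x_n)$ with $h$ given by a forward improper integral of $\Lambda_n(u,s)^l e^{l\int_s^u p_n}$ times the inverse variational flow of the first $n-1$ equations applied to $b$, with convergence and $C^{m+1-l}$ regularity extracted from \textbf{Condition 1} and Lemma \ref{lemma2}. The only difference is presentational: you derive the integral formula by writing the homological transport equation and solving along characteristics, whereas the paper defines $h_{s,t}$ as the limit $\lim_{x_n\to 0}\bigl(\varphi(s,t)\phi(t,s)x-x\bigr)/x_n^l$ and obtains the same integral by differentiating in the time variable; the resulting formula and all subsequent estimates coincide.
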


\begin{proof}
Let $\varphi(t,s,x_{1},\cdots,x_{n})$ denotes the solution of (10) with the initial condition $x(s)=(x_{1},\cdots,x_{n})$. Recall that $\Lambda(t,s)x$ denote the solution of the following system
\[
\dot{x}=B(t)x,
\] with the initial condition $x(s)=x$,
$\phi(t,s,x)$ denotes the solution of the following system
\[
\dot{x}=B(t)x+P(t,x).
\] with the initial condition $x(s)=x$.\\
In order to construct a smooth transformation between system (10) and (11), we define a scalar function $h_{s,t}:\mathbb{R}^{n-1}\rightarrow\mathbb{R}^{n-1}$ for $s,t\in \mathbb{R}$ by
$$h_{s,t}(x_{1},\cdots,x_{n-1}):=\begin{pmatrix}
\lim\limits_{x_{n}\rightarrow 0}\frac{\varphi_{1}(s,t)\phi(t.s)(x_{1},\cdots,x_{n})-x_{1}}{x_{n}^{l}}\\
\cdots\\
\lim\limits_{x_{n}\rightarrow 0}\frac{\varphi_{n-1}(s,t)\phi(t.s)(x_{1},\cdots,x_{n})-x_{n-1}}{x_{n}^{l}}
\end{pmatrix}~~for~~x_{1},\cdots,x_{n-1}\in \mathbb{R}.$$
In the following steps, we investigate some properties of the function $h_{s,t}$ and explain how to construct the desired transformation by using the function $h_{s,t}$.
Step 1:In this step, we prove that
$$h_{s,t}(x_{1},\cdots,x_{n-1})=\\-\int_{s}^{t}
\Lambda\cdot\Lambda_{n}^{l}g_{u,s}(x_{1},\cdots,x_{n-1})du,$$
where
\begin{equation}
\begin{split}
&g_{u,s}(x_{1},\cdots,x_{n-1})=exp(l\int_{s}^{u}p_{n}(v,\phi_{1},\cdots,\phi_{n-1})dv)\cdot\\
 &exp(-\int_{s}^{u}\begin{pmatrix}
\frac{\partial p_{1}}{\partial x_{1}}&\frac{\partial p_{1}}{\partial x_{2}}&\cdots&\frac{\partial p_{1}}{\partial x_{n-1}}\\
\frac{\partial p_{2}}{\partial x_{1}}&\frac{\partial p_{2}}{\partial x_{2}}&\cdots&\frac{\partial p_{2}}{\partial x_{n-1}}\\
\cdots&\cdots&\cdots&\cdots\\
\frac{\partial p_{n-1}}{\partial x_{1}}&\frac{\partial p_{n-1}}{\partial x_{2}}&\cdots&\frac{\partial p_{n-1}}{\partial x_{n-1}}&
\end{pmatrix}dv)\cdot\begin{pmatrix}
b_{1}\\
b_{2}\\
\vdots\\
b_{n-1}
\end{pmatrix}.
\end{split}
\end{equation}
For this purpose,for each $\tau,t,s,x_{1}\in \mathbb{R}$ we define
\begin{equation}
\begin{split}
&R(\tau,t,s,x_{1},\cdots,x_{n-1}):=\\
&\begin{pmatrix}
\lim\limits_{x_{n}\rightarrow 0}\frac{\varphi_{1}(\tau,t)\phi(t,s,x_{1},\cdots,x_{n})-\phi_{1}(\tau,s,x_{1},\cdots,x_{n-1})}{x_{n}^{l}}\\
\lim\limits_{x_{n}\rightarrow 0}\frac{\varphi_{2}(\tau,t)\phi(t,s,x_{1},\cdots,x_{n})-\phi_{2}(\tau,s,x_{1},\cdots,x_{n-1})}{x_{n}^{l}}\\
\cdots\\
\lim\limits_{x_{n}\rightarrow 0}\frac{\varphi_{n-1}(\tau,t)\phi(t,s,x_{1},\cdots,x_{n})-\phi_{n-1}(\tau,s,x_{1},\cdots,x_{n-1})}{x_{n}^{l}}
\end{pmatrix}.
\end{split}
\end{equation}
Since $\varphi$ is a solution of (10), it follows that
\begin{equation}
\begin{split}
&\frac{d}{d\tau}(\varphi_{j}(\tau,t)\phi(t,s)(x))\\
&=a_{j}(\tau)\varphi_{j}(\tau,t)\phi(t,s)(x)+\\
&p_{j}(\tau,\varphi_{1}(\tau,t)\phi(t,s)(x),\cdots,\varphi_{n-1}(\tau,t)\phi(t,s)(x))+\\
&b_{j}(\tau,\varphi_{1}(\tau,t)\phi(t,s)(x),\cdots,\varphi_{n-1}(\tau,t)\phi(t,s)(x))(\varphi_{n}(\tau,t)\phi(t,s)(x))^{l}+O(|x_{n}|^{l+1}).
\end{split}
\end{equation}
Since $\phi$ is a solution of (9), it follows that
\begin{equation}
\begin{split}
&\frac{d}{d\tau}(\phi_{j}(\tau,s,x_{1},\cdots,x_{n-1}))\\
&=a_{j}(\tau)\phi_{j}(\tau,s,x_{1},\cdots,x_{n-1})+\\
&p_{j}(\tau,\phi_{1}(\tau,s,x_{1},\cdots,x_{n-1}),\cdots,\phi_{n-1}(\tau,s,x_{1},\cdots,x_{n-1})).
\end{split}
\end{equation}

Using the Mean Value Theorem, we obtain
\begin{equation}
\begin{split}
&\lim\limits_{x_{n}\rightarrow 0}\frac{p_{j}(\tau,\varphi_{1}(\tau,t)\phi(t,s)(x),\cdots,\varphi_{n-1}(\tau,t)\phi(t,s)(x))-p_{j}(\tau,\phi_{1}(\tau,s,x),\cdots,\phi_{n-1}(\tau,s,x))}{x_{n}^{l}}
\\
&=\frac{\partial p_{j}}{\partial x_{1}}(\tau,\phi_{1}(\tau,s,x),\cdots,\phi_{n-1}(\tau,s,x)){x_{n}^{l}})\lim\limits_{x_{n}\rightarrow 0}\frac{\varphi_{1}(\tau,t)\phi(t,s,x)-\phi_{1}(\tau,s,x)}{x_{n}^{l}}+\\
&\cdots+\\
&\frac{\partial p_{j}}{\partial x_{n-1}}(\tau,\phi_{1}(\tau,s,x),\cdots,\phi_{n-1}(\tau,s,x)){x_{n}^{l}})\lim\limits_{x_{n}\rightarrow 0}\frac{\varphi_{n-1}(\tau,t)\phi(t,s,x)-\phi_{n-1}(\tau,s,x)}{x_{n}^{l}}.
\end{split}.
\end{equation}
Using $\lim\limits_{x_{n}\rightarrow 0}\varphi_{j}(\tau,t,\Phi(t,s,x)=\Phi_{j}(\tau,s,x_{1},\cdots,x_{n-1})$
then we have
\begin{equation}
\begin{split}
&\frac{d}{d\tau}R(\tau,t,s,x_{1},\cdots,x_{n-1})\\
&=\left[\begin{pmatrix}
\alpha_1 &         &        &\\
         &\alpha_2 &        &\\
         &         &\ddots  &\\
         &         &        &\alpha_n
\end{pmatrix}+\begin{pmatrix}
\frac{\partial p_{1}}{\partial x_{1}}&\frac{\partial p_{1}}{\partial x_{2}}&\cdots&\frac{\partial p_{1}}{\partial x_{n-1}}&\\
\frac{\partial p_{2}}{\partial x_{1}}&\frac{\partial p_{2}}{\partial x_{2}}&\cdots&\frac{\partial p_{2}}{\partial x_{n-1}}&\\
               \cdots&\cdots&\cdots&\cdots\\
\frac{\partial p_{n-1}}{\partial x_{1}}&\frac{\partial p_{n-1}}{\partial x_{2}}&\cdots&\frac{\partial p_{n-1}}{\partial x_{n-}}&
\end{pmatrix}\right]R\\
&+\begin{pmatrix}
b_{1}\\
b_{2}\\
\vdots\\
b_{n-1}
\end{pmatrix}\lim\limits_{x_{n}\rightarrow 0}(\frac{\varphi_{n}(\tau,t,\Phi(t,s,x))}{x_{n}})^{l},
\end{split}
\end{equation}
\begin{equation}
\begin{split}
&\varphi_{n}(\tau,t,\phi(t,s,x))\\
&=\Lambda_{n}(\tau,t)\phi_{n}(t,s,x)e^{\int_{t}^{\tau}p_{n}(u,\varphi_{1},\cdots,\varphi_{n-1})du}+O(x_{n}^{l+1})\\
&=\Lambda_{n}(\tau,t)\Lambda_{n}(t,s)x_{n}e^{\int_{s}^{t}p_{n}(u,\phi_{1},\cdots,\phi_{n-1})du}\\
&\cdot e^{\int_{t}^{\tau}p_{n}(u,\varphi_{1},\cdots,\varphi_{n-1})dv}+O(x_{n}^{l+1}).
\end{split}
\end{equation}

Consequently,
$$
\lim\limits_{x_{n}\rightarrow 0}\frac{\varphi_{n}(\tau,t,\Phi(t,s,x))}{x_{n}}=\Lambda_{n}(\tau,s)e^{\int_{s}^{\tau}p_{n}(u,\Phi_{1},\cdots,\Phi_{n-1})du}.
$$
Therefore,
\begin{equation}
\begin{split}
&\frac{d}{d\tau}R(\tau,t,s,x_{1},\cdots,x_{n-1})\\
&=\left[B(t)+\begin{pmatrix}
\frac{\partial p_{1}}{\partial x_{1}}&\frac{\partial p_{1}}{\partial x_{2}}&\cdots&\frac{\partial p_{1}}{\partial x_{n-1}}&\\
\frac{\partial p_{2}}{\partial x_{1}}&\frac{\partial p_{2}}{\partial x_{2}}&\cdots&\frac{\partial p_{2}}{\partial x_{n-1}}&\\
               \cdots&\cdots&\cdots&\cdots\\
\frac{\partial p_{n-1}}{\partial x_{1}}&\frac{\partial p_{n-1}}{\partial x_{2}}&\cdots&\frac{\partial p_{n-1}}{\partial x_{n-1}}&
\end{pmatrix}\right]R\\
&+\begin{pmatrix}
b_{1}\\
b_{2}\\
\vdots\\
b_{n-1}
\end{pmatrix}\Lambda_{n}(\tau,s)^{l}e^{l\int_{s}^{\tau}p_{n}(u,\Phi_{1},\cdots,\Phi_{n-1})du}.
\end{split}
\end{equation}

Then,
$$
R(\tau,t,s,x_{1},\cdots,x_{n-1})=\int_{t}^{\tau}\Lambda\cdot\Lambda_n(u,s)^{l}g_{u,\tau}(x_{1},\cdots,x_{n-1})du,
$$
where
\begin{equation}
\begin{split}
g_{u,\tau}(x_{1},\cdots,x_{n-1})=&exp(\int_{t}^{\tau}\begin{pmatrix}
\frac{\partial p_{1}}{\partial x_{1}}&\frac{\partial p_{1}}{\partial x_{2}}&\cdots&\frac{\partial p_{1}}{\partial x_{n-1}}&\\
\frac{\partial p_{2}}{\partial x_{1}}&\frac{\partial p_{2}}{\partial x_{2}}&\cdots&\frac{\partial p_{2}}{\partial x_{n-1}}&\\
               \cdots&\cdots&\cdots&\cdots\\
\frac{\partial p_{n-1}}{\partial x_{1}}&\frac{\partial p_{n-1}}{\partial x_{2}}&\cdots&\frac{\partial p_{n-1}}{\partial x_{n-1}}&
\end{pmatrix}dv)\\
&\cdot\begin{pmatrix}
b_{1}\\
b_{2}\\
\vdots\\
b_{n-1}
\end{pmatrix}exp(l\int_{s}^{u}p_{n}(v,\Phi_{1},\cdots,\Phi_{n-1})dv).
\end{split}
\end{equation}

Because $h_{s,t}(x_{1},\cdots,x_{n-1})=R(s,t,x_{1},\cdots,x_{n-1})$, we proved the equation.\\

Step 2: We will show the existence of $h_{s}:\mathbb{R}^{n-1}\rightarrow\mathbb{R}^{n-1}$, $h_{s}$ is defined as follow:
$$h_{s}(x_{1},\cdots,x_{n-1})=\lim\limits_{x_{n}\rightarrow 0}h_{s,t}(x_{1},\cdots,x_{n-1}),$$

let  $t_{2}>t_{1}>s$, then
\begin{equation}
\begin{split}
&\parallel h_{s,t_{1}}(x_{1},\cdots,x_{n-1})-h_{s,t_{2}}(x_{1},\cdots,x_{n-1})\parallel\\
&\leq\int_{t_{1}}^{t_{2}}\parallel\begin{pmatrix}
\Lambda_1 &         &        &\\
         &\Lambda_2 &        &\\
         &         &\ddots  &\\
         &         &        &\Lambda_{(n-1)}
\end{pmatrix}\parallel\cdot\\
&\parallel\Lambda_{n}(u,s)^{l}\parallel\cdot\parallel g_{u,s}(x_{1},\cdots,x_{n-1})\parallel du
\end{split}
\end{equation}
which implies that
\begin{equation}
\begin{split}
&\parallel h_{s,t_{1}}(x_{1},\cdots,x_{n-1})-h_{s,t_{2}}(x_{1},\cdots,x_{n-1})\parallel\\
&\leq K(\varepsilon)^{l+1}\int_{t_{1}}^{t_{2}}e^{(l(\bar{\lambda}_{n}+\varepsilon)-\underline{\lambda}_{n-1}+\varepsilon)}\parallel g_{u,s}(x_{1},\cdots,x_{n-1})\parallel du.
\end{split}
\end{equation}

By the definition of $g$ and the boundedness of the derivatives of $p_{i}, 1\leq i\leq n$, we get
$$\parallel g_{u,s}(x_{1},\cdots,x_{n-1})\parallel\leq M\cdot e^{(l+1)\delta(u-s)}~~for~~u\geq s.$$

Therefore
$$
\parallel h_{s,t_{1}}(x_{1},\cdots,x_{n-1})-h_{s,t_{2}}(x_{1},\cdots,x_{n-1})\parallel$$
$$\leq M\cdot K(\varepsilon)^{l+1}\int_{t_{1}}^{t_{2}}e^{(l\bar{\lambda}_{n}-\underline{\lambda}_{n-1}+(l+1)(\varepsilon+\delta))(u-s)}du.
$$
$l\bar{\lambda}_{n}-\underline{\lambda}_{n-1}+(l+1)(\varepsilon+\delta)<0$ implies that the limit $\lim\limits_{x_{n}\rightarrow 0}h_{s,t}(x_{1},\cdots,x_{n-1})$ exists uniformly in $x_{1},\cdots,x_{n-1}$.\\

Step 3: $h_{s}$ is $C^{m+1-l}$, that is to say, the following improper integral (23) is uniform convergence
\begin{equation}
\int_{s}^{\infty}\begin{pmatrix}
\Lambda_1 &         &        &\\
         &\Lambda_2 &        &\\
         &         &\ddots  &\\
         &         &        &\Lambda_{(n-1)}
\end{pmatrix}\cdot\Lambda_{n}(u,s)^{l}\cdot\frac{\partial^{i}g_{u,s}}{\partial x_{1}^{i_{1}}\cdots x_{n-1}^{i_{n-1}}}(x_{1},\cdots,x_{n-1})du,
\end{equation}
for $i=i_{1}+\cdots+i_{n-1}=1,\cdots,m+1-l$.

Case1:$\bar{\lambda}_{1}\geq 0$, there is a positive constant $C_{1}$ such that for $u\geq s$
 $$\parallel\frac{\partial^{i}g_{u,s}}{\partial x_{1}^{i_{1}}\cdots x_{n-1}^{i_{n-1}}}(x_{1},\cdots,x_{n-1})\parallel\leq C_{1}e^{((l+1)\delta+i(\bar{\lambda}_{1}+\varepsilon+\delta))(u-s)},$$
 $i=i_{1}+\cdots+i_{n-1}=1,\cdots,m+1-l.$

 for any $t_{2}>t_{1}>s, i=1,\cdots,m+1-l,$ we get
\begin{equation}
\begin{split}
&\parallel\int_{t_{1}}^{t_{2}}\begin{pmatrix}
\Lambda_1 &         &        &\\
         &\Lambda_2 &        &\\
         &         &\ddots  &\\
         &         &        &\Lambda_{(n-1)}
\end{pmatrix}\cdot\Lambda_{n}(u,s)^{l}\cdot\frac{\partial^{i}g_{u,s}}{\partial x_{1}^{i_{1}}\cdots x_{n-1}^{i_{n-1}}}(x_{1},\cdots,x_{n-1})du\parallel\\
&\leq \hat{C}_{1}\frac{e^{(t_{2}-s)\alpha_{1}}-e^{(t_{1}-s)\alpha_{1}}}{\alpha_{1}}
\end{split}
\end{equation}

where
$$
\hat{C}_{1}=C_{1}K(\varepsilon)^{l+1}, \alpha_{1}=l\bar{\lambda}_{n}-\underline{\lambda}_{n-1}+i\bar{\lambda}_{1}+(l+i+1)(\varepsilon+\delta)
$$
Using spectral gap condition, we get the function $h_{s}$ is $C^{m+1-l}$.

Case2:$\bar{\lambda}_{1}<0$, there is a positive constant $C_{2}$ such that for $u\geq s$
$$ \parallel\frac{\partial^{i}g_{u,s}}{\partial x_{1}^{i_{1}}\cdots x_{n-1}^{i_{n-1}}}(x_{1},\cdots,x_{n-1})\parallel\leq C_{2}e^{((l+1)\delta+(\bar{\lambda}_{1}+\varepsilon+\delta))(u-s)}$$
$ i=i_{1}+\cdots+i_{n-1}=1,\cdots,m+1-l$

for any $t_{2}>t_{1}>s, i=1,\cdots,m+1-l,$ we get
\begin{equation}
\begin{split}
&\parallel\int_{t_{1}}^{t_{2}}\begin{pmatrix}
\Lambda_1 &         &        &\\
         &\Lambda_2 &        &\\
         &         &\ddots  &\\
         &         &        &\Lambda_{(n-1)}
\end{pmatrix}\cdot\Lambda_{n}(u,s)^{l}\cdot \\
&\frac{\partial^{i}g_{u,s}}{\partial x_{1}^{i_{1}}\cdots x_{n-1}^{i_{n-1}}}(x_{1},\cdots,x_{n-1})du\parallel\\
&\leq \hat{C}_{1}\frac{e^{(t_{2}-s)\alpha_{2}}-e^{(t_{1}-s)\alpha_{2}}}{\alpha_{2}}.
\end{split}
\end{equation}

where
$$
\hat{C}_{2}=C_{2}K(\varepsilon)^{l+1}, \alpha_{2}=l\bar{\lambda}_{n}-\underline{\lambda}_{n-1}+\bar{\lambda}_{1}+(l+2)(\varepsilon+\delta).
$$

Using spectral gap condition, we get the function $h_{s}$ is $C^{m+1-l}$.\\

Step 4:$\frac{\partial h_{s}}{\partial s} ~~is~~ C^{m-l}$.\\
Analog to the proof in Step 3, we get that $h_{s}$ is $C^{m-l}$.\\

Step 5:We define the following map :
$\tilde{\varphi}:\mathbb{R}\times \mathbb{R}\times \mathbb{R}^{n}\rightarrow\mathbb{R}^{n}$ by

$$\tilde{\varphi}(t,s,x)=H_{t}^{-1}\varphi(t,s)H_{s}(x),$$
where
$$H_{t}(x)=\begin{pmatrix}
x_{1}+h_{t,1}(x_{1},\cdots,x_{n-1})x_{n}^{l}\\
x_{2}+h_{t,2}(x_{1},\cdots,x_{n-1})x_{n}^{l}\\
\vdots\\
x_{n-1}+h_{t,n-1}(x_{1},\cdots,x_{n-1})x_{n}^{l}\\
x_{n}
\end{pmatrix}.
$$

By the Implicit Function Theorem ,
$$
H_{t}^{-1}(x)=\begin{pmatrix}
x_{1}+\alpha_{t,1}(x_{1},\cdots,x_{n-1})x_{n}^{l}\\
x_{2}+\alpha_{t,2}(x_{1},\cdots,x_{n-1})x_{n}^{l}\\
\vdots\\
x_{n-1}+\alpha_{t,n-1}(x_{1},\cdots,x_{n-1})x_{n}^{l}\\
x_{n}
\end{pmatrix}$$
where $\alpha_{t} ~~\text{is}~~ C^{m+1-l}, \frac{d\alpha_{t}}{dt} ~~\text{is}~~ C^{m-l}$. We show that the nonautonomous differential equation corresponding to $\tilde{\varphi}$ is of the form of Eq.(11) and this completes the proof of this proposition.\\
Define
\begin{equation}
  \tilde{f}(t,x):=\frac{\partial \tilde{\varphi}}{\partial t}(t,s,x).
\end{equation}
By $\tilde{\varphi}(t,s\tilde{\varphi}(s,\tau,x))=\tilde{\varphi}(t,\tau,x)$, we get
\begin{equation}
\begin{split}
 \frac{\partial \tilde{\varphi}}{\partial t}(t,s,x)&=\lim\limits_{\Delta\rightarrow 0}\frac{\tilde{\varphi}(t+\Delta,s,x)-\tilde{\varphi}(t,s,x)}{\Delta}\\
 &=\lim\limits_{\Delta\rightarrow 0}\frac{\tilde{\varphi}(t+\Delta,t,\tilde{\varphi}(t,s,x))-\tilde{\varphi}(t,s,x)}{\Delta}\\
 &=\tilde{f}(t,\tilde{\varphi}(t,s,x)),
\end{split}
\end{equation}

which implies $\tilde{\varphi}$ is a solution of $\dot{x}=\tilde{f}(t,x).$\\
Morever
$$
\tilde{\varphi}(t,s,x)=$$
$$\begin{pmatrix}
\varphi_{1}(t,s,H_{s}(x))+\alpha_{t,1}(\varphi_{1}(t,s,H_{s}(x)),\cdots,\varphi_{n-1}(t,s,H_{s}(x)))\varphi_{n}(t,s,H_{s}(x))^{l}\\
\varphi_{2}(t,s,H_{s}(x))+\alpha_{t,2}(\varphi_{1}(t,s,H_{s}(x)),\cdots,\varphi_{n-1}(t,s,H_{s}(x)))\varphi_{n}(t,s,H_{s}(x))^{l}\\
\vdots\\
\varphi_{n-1}(t,s,H_{s}(x))+\alpha_{t,n-1}(\varphi_{1}(t,s,H_{s}(x)),\cdots,\varphi_{n-1}(t,s,H_{s}(x)))\varphi_{n}(t,s,H_{s}(x))^{l}\\
\varphi_{n}(t,s,H_{s}(x)),
\end{pmatrix}
$$
which implies that $\tilde{f}$ is in the class $C(m,n)$.\\
Next
$$
h_{t}(x_{1},\cdots,x_{n-1})=\begin{pmatrix}
\lim\limits_{\tau\rightarrow\infty}\lim\limits_{x_{n}\rightarrow 0}\frac{\varphi_{1}(t,\tau,\phi(\tau,t,x))-x_{1}}{x_{n}^{l}}\\
\lim\limits_{\tau\rightarrow\infty}\lim\limits_{x_{n}\rightarrow 0}\frac{\varphi_{2}(t,\tau,\phi(\tau,t,x))-x_{2}}{x_{n}^{l}}\\
\cdots\\
\lim\limits_{\tau\rightarrow\infty}\lim\limits_{x_{n}\rightarrow 0}\frac{\varphi_{n-1}(t,\tau,\phi(\tau,t,x))-x_{n-1}}{x_{n}^{l}}.\\
\end{pmatrix}
$$
Consequently, we have
\begin{equation}
\begin{split}
&h_{t}(\phi_{1}(t,s,x_{1},\cdots,x_{n-1}),\cdots,\phi_{1}(t,s,x_{1},\cdots,x_{n-1}))\\
&=\begin{pmatrix}
\lim\limits_{\tau\rightarrow\infty}\lim\limits_{x_{n}\rightarrow 0}\frac{\varphi_{1}(t,\tau,\phi(\tau,t,x))-\phi_{1}(t,s,x_{1},\cdots,x_{n-1})}{\phi_{n}(t,s,x)^{l}}\\
\lim\limits_{\tau\rightarrow\infty}\lim\limits_{x_{n}\rightarrow 0}\frac{\varphi_{2}(t,\tau,\phi(\tau,t,x))-\phi_{2}(t,s,x_{1},\cdots,x_{n-1})}{\phi_{n}(t,s,x)^{l}}\\
\cdots\\
\lim\limits_{\tau\rightarrow\infty}\lim\limits_{x_{n}\rightarrow 0}\frac{\varphi_{n-1}(t,\tau,\phi(\tau,t,x))-\phi_{n-1}(t,s,x_{1},\cdots,x_{n-1})}{\phi_{n}(t,s,x)^{l}}\\
\end{pmatrix}\\
&=\begin{pmatrix}
\lim\limits_{\tau\rightarrow\infty}\lim\limits_{x_{n}\rightarrow 0}\frac{\varphi_{1}(t,s)\varphi(s,\tau)\phi(\tau,s)x-\phi_{1}(t,s,x_{1},\cdots,x_{n-1})}{\phi_{n}(t,s,x)^{l}}\\
\lim\limits_{\tau\rightarrow\infty}\lim\limits_{x_{n}\rightarrow 0}\frac{\varphi_{2}(t,s)\varphi(s,\tau)\phi(\tau,s)x-\phi_{2}(t,s,x_{1},\cdots,x_{n-1})}{\phi_{n}(t,s,x)^{l}}\\
\cdots\\
\lim\limits_{\tau\rightarrow\infty}\lim\limits_{x_{n}\rightarrow 0}\frac{\varphi_{n-1}(t,s)\varphi(s,\tau)\phi(\tau,s)x-\phi_{n-1}(t,s,x_{1},\cdots,x_{n-1})}{\Phi_{n}(t,s,x)^{l}}\\
\end{pmatrix}\\
&=\begin{pmatrix}
\lim\limits_{\tau\rightarrow\infty}\lim\limits_{x_{n}\rightarrow 0} \frac{\varphi_{1}(t,s,x^{'})-\phi_{1}(t,s,x_{1},\cdots,x_{n-1})}{\phi_{n}(t,s,x)^{l}}\\
\lim\limits_{\tau\rightarrow\infty}\lim\limits_{x_{n}\rightarrow 0} \frac{\varphi_{2}(t,s,x^{'})-\phi_{2}(t,s,x_{1},\cdots,x_{n-1})}{\phi_{n}(t,s,x)^{l}}\\
\cdots\\
\lim\limits_{\tau\rightarrow\infty}\lim\limits_{x_{n}\rightarrow 0} \frac{\varphi_{n-1}(t,s,x^{'})-\phi_{n-1}(t,s,x_{1},\cdots,x_{n-1})}{\phi_{n}(t,s,x)^{l}}\\
\end{pmatrix}\\
&=\begin{pmatrix}
\lim\limits_{x_{n}\rightarrow 0} \frac{\varphi_{1}(t,s,x^{''})-\phi_{1}(t,s,x_{1},\cdots,x_{n-1})}{\phi_{n}(t,s,x)^{l}}\\
\lim\limits_{x_{n}\rightarrow 0} \frac{\varphi_{2}(t,s,x^{''})-\phi_{2}(t,s,x_{1},\cdots,x_{n-1})}{\phi_{n}(t,s,x)^{l}}\\
\cdots\\
\lim\limits_{x_{n}\rightarrow 0} \frac{\varphi_{n-1}(t,s,x^{''})-\phi_{n-1}(t,s,x_{1},\cdots,x_{n-1})}{\phi_{n}(t,s,x)^{l}}\\
\end{pmatrix},
\end{split}
\end{equation}
where
$$x^{'}=x_{1}+h_{s,\tau,1}(x_{1},\cdots,x_{n-1})x_{n}^{l},\cdots,x_{n-1}+h_{s,\tau,n-1}(x_{1},\cdots,x_{n-1})x_{n}^{l},x_{n}$$
$$x^{''}=x_{1}+h_{s,1}(x_{1},\cdots,x_{n-1})x_{n}^{l},\cdots,x_{n-1}+h_{s,n-1}(x_{1},\cdots,x_{n-1})x_{n}^{l},x_{n}.$$\\
which implies that
\begin{equation}
\begin{split}
&\phi_{j}(t,s,x_{1},\cdots,x_{n-1})\\
&+h_{t}(\phi_{1}(t,s,x_{1},\cdots,x_{n-1}),\cdots,\phi_{n-1}(t,s,x_{1},\cdots,x_{n-1}))\phi_{n}(t,s,x)^{l}\\
&=\varphi_{j}(t,s,x_{1}+h_{s,1}(x_{1},\cdots,x_{n-1})x_{n}^{l},\cdots,x_{n-1}+h_{s,n-1}(x_{1},\cdots,x_{n}^{l}),x_{n})\\
&+O(x_{n}^{l}).
\end{split}
\end{equation}
Therefore,by definition of $\tilde{\varphi}$ we get
$$\tilde{\varphi}_{j}(t,s,x_{1},\cdots,x_{n})=\phi_{j}(t,s,x_{1},\cdots,x_{n-1})+O(x_{n}^{l}),$$
which completes the proof of Proposition \ref{p1}.
\end{proof}

\begin{equation}
\label{eq3}
\dot{x}=B(t)x+P(t,x)+\begin{pmatrix}
b_{1}(t,x_{1},\cdots,x_{n-1})x_{n}^{l+1}\\
         \cdots\\
b_{n-1}(t,x_{1},\cdots,x_{n-1})x_{n}^{l+l}\\
b_{n}(t,x_{1},\cdots,x_{n-1})x_{n}^{l+l}
\end{pmatrix}+r_{5}(t,x)
\end{equation}
where $b_{i}(t,x_{1},\cdots,x_{n-1})$, $1\leq i\leq n$ are $C^{m-l}$ Carath\'{e}odory functions and $r_{5}(t,x)$ belongs to $C(m+1,n)$ and $r_{3}(t,x)=O(x_{n}^{l+2})$.
\begin{equation}
\label{eq4}
\dot{x}=B(t)x+P(t,x)+r_{6}(t,x)
\end{equation}
where $r_{6}(t,x)$ belongs to $C(m,n)$, $r_{6j}(t,x)=O(x_{n}^{l+1})$, $1\leq j\leq n-1$ and $r_{6n}(t,x)=O(x_{n}^{l+2})$.\\
\textbf{Condition 2}\\
a)$\lambda_{n}<\cdots<\lambda_{1}$, $\lambda_{n}<0$;\\
b)$\bar{\lambda}_{n}+i\bar{\lambda}_{1}+(l+i)(\varepsilon+\delta)<0$ as $\bar{\lambda}_{1}\geq0$;\\
c)$ l\bar{\lambda}_{n}+\bar{\lambda}_{1}+(l+1)(\varepsilon+\delta)<0$ as $ \bar{\lambda}_{1}<0$.\\
where $1\leq l\leq n-1$ and $1\leq i\leq n$, $\varepsilon$ and $\delta$ satisfied Theorem \ref{th3}.
\begin{prop}
\label{2}
If \textbf{Condition 2} hold, then system (\ref{eq3}) is $C^{m-l}$ equivalent to the system (\ref{eq4}).
\end{prop}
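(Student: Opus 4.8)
The plan is to mirror the five-step argument of Proposition \ref{p1}, the essential difference being that the near-identity change of coordinates now acts only on the contracting coordinate $x_n$. Passing from (\ref{eq3}) to (\ref{eq4}) only asks us to remove the degree-$(l+1)$ term $b_n(t,x_1,\dots,x_{n-1})x_n^{l+1}$ from the $n$-th equation (so that $r_{6n}=O(x_n^{l+2})$), while the corresponding terms $b_j x_n^{l+1}$ in the first $n-1$ equations are allowed to persist inside $r_6$. I would therefore look for a transformation
$$
H_t(x)=\bigl(x_1,\dots,x_{n-1},\,x_n+h_t(x_1,\dots,x_{n-1})\,x_n^{l+1}\bigr)^{T}
$$
with a single scalar coefficient $h_t$, and set $\tilde\varphi(t,s,x):=H_t^{-1}\varphi(t,s)H_s(x)$, where $\varphi$ is the flow of (\ref{eq3}) and $\phi$ is the flow of the reference equation $\dot x=B(t)x+P(t,x)$.

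First I would isolate $h_t$ by the limiting device of Step 1 of Proposition \ref{p1}, applied to the $n$-th component. Put
$$
R(\tau,t,s,x_1,\dots,x_{n-1})=\lim_{x_n\to0}\frac{\varphi_n(\tau,t)\phi(t,s)(x)-\phi_n(\tau,s,x)}{x_n^{l+1}}.
$$
Since $\{x_n=0\}$ is invariant, the first $n-1$ components of $\varphi(\tau,t)\phi(t,s)(x)$ tend to $\phi_j(\tau,s,x_1,\dots,x_{n-1})$ as $x_n\to0$; differentiating $R$ in $\tau$, the cross term $[p_n(\tau,\varphi)-p_n(\tau,\phi)]\varphi_n/x_n^{l+1}$ then vanishes in the limit (it is of order $x_n^{l+2}/x_n^{l+1}$), so $R$ decouples from the other components and solves the scalar linear equation
$$
\frac{dR}{d\tau}=\bigl(a_n(\tau)+p_n(\tau,\phi)\bigr)R+b_n(\tau,\phi)\bigl(\Lambda_n(\tau,s)\,e^{\int_s^\tau p_n\,dv}\bigr)^{l+1},
$$
where Lemma \ref{lemma1} (and the leading-order identification of $\varphi_n$ as in Proposition \ref{p1}) gives $\lim_{x_n\to0}\varphi_n(\tau,t)\phi(t,s)(x)/x_n=\Lambda_n(\tau,s)e^{\int_s^\tau p_n\,dv}$. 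Variation of parameters with $R(t,t,s)=0$, evaluated at $\tau=s$, then produces, after the cancellation $\Lambda_n(s,u)\Lambda_n(u,s)^{l+1}=\Lambda_n(u,s)^{l}$,
$$
h_{s,t}(x_1,\dots,x_{n-1})=-\int_s^t\Lambda_n(u,s)^{l}\,e^{l\int_s^u p_n(v,\phi)\,dv}\,b_n(u,\phi)\,du.
$$

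Next I would carry out Steps 2--4, showing that $h_s:=\lim_{t\to\infty}h_{s,t}$ exists, lies in $C^{m-l}$, and has $\partial_s h_s\in C^{m-l-1}$. As $b_n$ is a $C^{m-l}$ Carath\'{e}odory function with globally bounded derivatives and all $x$-dependence of the integrand enters through $\phi_1,\dots,\phi_{n-1}$, differentiating $i$ times and invoking Lemma \ref{lemma2} bounds the $i$-th derivative of the integrand by a constant multiple of $e^{(l\bar\lambda_n+i\bar\lambda_1+(l+i)(\varepsilon+\delta))(u-s)}$ when $\bar\lambda_1\ge0$, and of $e^{(l\bar\lambda_n+\bar\lambda_1+(l+1)(\varepsilon+\delta))(u-s)}$ when $\bar\lambda_1<0$. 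Because $\bar\lambda_n<0$, \textbf{Condition 2}(b)--(c) makes each of these exponents strictly negative for $0\le i\le m-l$, so the Cauchy estimate $\|h_{s,t_1}-h_{s,t_2}\|\le C\int_{t_1}^{t_2}\cdots\,du$ over $[t_1,t_2]$, exactly as in Steps 2--3 of Proposition \ref{p1}, shows that the improper integral and all its $x$-derivatives up to order $m-l$ converge uniformly in $(x_1,\dots,x_{n-1})$; the same argument with one extra $s$-derivative yields $\partial_s h_s\in C^{m-l-1}$.

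Finally, for Step 5 I would invert $H_t$ by the Implicit Function Theorem, obtaining
$$
H_t^{-1}(x)=\bigl(x_1,\dots,x_{n-1},\,x_n+\alpha_t(x_1,\dots,x_{n-1})\,x_n^{l+1}\bigr)^{T}
$$
with $\alpha_t\in C^{m-l}$ and $\partial_t\alpha_t\in C^{m-l-1}$, and then verify, using Lemma \ref{lemma2} together with the bounds on $\phi_n$ from the preceding section, that $\tilde\varphi(t,s,x)=H_t^{-1}\varphi(t,s)H_s(x)$ is the flow of an equation $\dot x=\tilde f(t,x)$ with $\tilde f$ in the class $C(m,n)$, using the cocycle identity $\tilde\varphi(t,s,\tilde\varphi(s,\tau,x))=\tilde\varphi(t,\tau,x)$ as in Proposition \ref{p1}. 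By the very choice of $h_s$, the coefficient of $x_n^{l+1}$ in the $n$-th component of $\tilde f$ cancels $b_n$, so $\tilde f_n=a_n(t)x_n+p_n(t,x_1,\dots,x_{n-1})x_n+O(x_n^{l+2})$, whereas the first $n-1$ components keep their $O(x_n^{l+1})$ parts; this is precisely the form (\ref{eq4}), giving a $C^{m-l}$ equivalence. I expect the main obstacle to be Step 3: securing the uniform convergence of the differentiated improper integrals under the sharp spectral-gap inequalities of \textbf{Condition 2}, together with confirming that the surviving cross terms in the homological identity are genuinely of order $x_n^{l+2}$ in the $n$-th slot, so that the normalization of the contracting direction is clean.
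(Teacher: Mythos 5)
Your proposal follows essentially the same route as the paper: the same single-coefficient transformation $H_t(x)=(x_1,\dots,x_{n-1},\,x_n+h_t(x_1,\dots,x_{n-1})x_n^{l+1})^{T}$, the same limiting definition of $h_{s,t}$ leading to the integral formula $h_{s,t}=-\int_s^t\Lambda_n(u,s)^{l}e^{l\int_s^u p_n\,dv}\,b_n(u,\phi)\,du$, the same convergence and $C^{m-l}$-smoothness estimates via Lemma \ref{lemma2} under \textbf{Condition 2}, and the same conjugation $\tilde\varphi=H_t^{-1}\varphi(t,s)H_s$ in the final step. Your write-up is in fact more explicit than the paper's (which defers Steps 1 and 5 to the proof of Proposition \ref{p1}), but it is the same argument.
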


\begin{proof}
Let $\varphi(t,s,x)$ denote the solution of (30) with the initial condition $x(s)=(x_{1},\cdots,x_{n})$. Recall that $\Lambda(t,s)x$ denotes the solution of the following system
\[
\dot{x}=B(t)x,
\] with the initial condition $x(s)=x$,
$\phi(t,s,x)$ denotes the solution of the following system
\[
\dot{x}=B(t)x+P(t,x).
\] with the initial condition $x(s)=x$.\\
In order to construct a smooth transformation between system (30) and (31), we define a scalar function $h_{s,t}:\mathbb{R}^{n-1}\rightarrow\mathbb{R}$ for $s,t\in \mathbb{R}$, by
$$h_{s,t}(x_{1},\cdots,x_{n-1}):=\lim\limits_{x_{n}\rightarrow 0}\frac{\varphi_{n}(s,t)\Phi(t.s)(x)-x_{n}}{x_{n}^{l+1}}~~for~~x_{1},\cdots,x_{n-1}\in \mathbb{R}.$$
In the following steps, we investigate some properties of the function $h_{s,t}$ and explain how to construct the desired transformation by using the function $h_{s,t}$.\\
Step 1:As the step 1 of Proposition 5.1, the following equation holds
\begin{equation}
h_{s,t}(x_{1},\cdots,x_{n-1})=-\int_{s}^{t}\Lambda_{n}(u,s)^{l}g_{u,s}(x_{1},\cdots,x_{n-1})du,
\end{equation}
where
\begin{equation}
g_{u,s}(x_{1},\cdots,x_{n-1})=b_{3}(u,\phi_{1},\cdots,\phi_{n-1})e^{l\int_{s}^{u}p_{n}(v,\phi_{1},\cdots,\phi_{n-1})dv}.
\end{equation}
Step 2: In this step, \\
$h_{s}:\mathbb{R}^{n-1}\rightarrow\mathbb{R}$ and $h_{s}(x_{1},\cdots,x_{n-1})=\lim\limits_{t\rightarrow\infty}h_{s,t}(x_{1},\cdots,x_{n-1})$,

let $t_{2}>t_{1}>s$, then
$$
\parallel h_{s,t_{1}}(x_{1},\cdots,x_{n-1})-h_{s,t_{2}}(x_{1},\cdots,x_{n-1})\parallel$$
$$\leq M\cdot K(\varepsilon)^{l}\int_{t_{1}}^{t_{2}}e^{l(\bar{\lambda_{n}}+\varepsilon+\delta)(u-s)}du.
$$
Since $\bar{\lambda_{n}}+\varepsilon+\delta<0$, it follows that the limit $\lim\limits_{t\rightarrow\infty}h_{s,t}(x_{1},\cdots,x_{n-1})$ exists uniformly in $x_{1},\cdots,x_{n-1}$.\\
Step 3:$h_{s}$ is $C^{m-l}$, that is to say, the following improper integral is uniform convergence
\begin{equation}
\int_{s}^{\infty}\Lambda_{n}(u,s)^{l}\cdot \frac{\partial^{i}g_{u,s}}{\partial x_{1}^{i_{1}}\cdots x_{n-1}^{i_{n-1}}}(x_{1},\cdots,x_{n-1})du,
\end{equation}
for $i=i_{1}+\cdots+i_{n-1}=1,\cdots,m-l$
For this purpose, we consider the following two cases $\bar{\lambda}_{1}\geq 0$ and $\bar{\lambda_{1}}< 0$.\\
Case 1:$\bar{\lambda}_{1}\geq 0$. there exists a positive constant $C_{1}$ such that for $u\geq s,  i=i_{1}+\cdots+i_{n-1}=1,\cdots,m-l$
$$\parallel\frac{\partial^{i}g_{u,s}}{\partial x_{1}^{i_{1}}\cdots x_{n-1}^{i_{n-1}}}(x_{1},\cdots,x_{n-1})\parallel\leq C_{1}e^{(l\delta+i(\bar{\lambda}_{1}+\varepsilon+\delta))(u-s)}.
$$
Consequently, for any $t_{2}>t_{1}>s, i=1,\cdots,m-l$, we get
$$
\parallel\int_{t_{1}}^{t_{2}}\Lambda_{n}(u,s)^{l}\cdot \frac{\partial^{i}g_{u,s}}{\partial x_{1}^{i_{1}}\cdots x_{n-1}^{i_{n-1}}}(x_{1},\cdots,x_{n-1})du\parallel$$
$$\leq \hat{C}_{1}\frac{e^{(t_{2}-s)\alpha_{1}}-e^{(t_{1}-s)\alpha_{1}}}{\alpha_{1}},
$$
where
$$\hat{C}_{1}=C_{1}K(\varepsilon)^{l}, $$ $$\alpha_{1}=\bar{\lambda}_{n}+i\bar{\lambda}_{1}+(l+i)(\varepsilon+\delta),$$
which proves the uniform convergence in $x_{1},\cdots,x_{n-1}$ in (34). Therefore, the function $h_{s}$ is $C_{m-l}$.\\
Case 2:$\bar{\lambda}_{1}< 0$. there exists a positive constant $C_{2}$ such that for $u\geq s,  i=i_{1}+\cdots+i_{n-1}=1,\cdots,m-l$
$$
\parallel \frac{\partial^{i}g_{u,s}}{\partial x_{1}^{i_{1}}\cdots x_{n-1}^{i_{n-1}}}(x_{1},\cdots,x_{n-1})\parallel\leq \hat{C}_{2}e^{(l\delta+(\bar{\lambda}_{1}+\varepsilon+\delta))(u-s)}.
$$
Consequently, for any $t_{2}>t_{1}>s, i=1,\cdots,m-l$, we get
$$
\parallel\int_{t_{1}}^{t_{2}}\Lambda_{n}(u,s)^{l}\cdot\frac{\partial^{i}g_{u,s}}{\partial x_{1}^{i_{1}}\cdots x_{n-1}^{i_{n-1}}}(x_{1},\cdots,x_{n-1})du\parallel$$
$$\leq \hat{C}_{2}\frac{e^{(t_{2}-s)\alpha_{2}}-e^{(t_{1}-s)\alpha_{2}}}{\alpha_{2}},
$$
where
$$\hat{C}_{2}=C_{2}K(\varepsilon)^{l+1}, $$ $$\alpha_{2}=l\bar{\lambda}_{n}+\bar{\lambda}_{1}+(l+1)(\varepsilon+\delta).$$
Step 4:$\frac{\partial h_{s}}{\partial s} ~~is~~ C^{m-l}$,
 Analog to the proof in  Step 3.\\
Step 5:Using the function $h_{s}$, we define the following function
$$\tilde{\varphi}:\mathbb{R}\times \mathbb{R}\times \mathbb{R}^{n}\rightarrow\mathbb{R}^{n}$$
$$\tilde{\varphi}(t,s,x)=H_{t}^{-1}\varphi(t,s)H_{s}(x)$$
where $H_{t}(x)=\begin{pmatrix}
x_{1}\\
x_{2}\\
\vdots\\
x_{n-1}\\
x_{n}+h_{t}(x_{1},\cdots,x_{n-1})x_{n}^{l+1}
\end{pmatrix}$,
Similar to Step 5 in the proof of Proposition \ref{p1}, we can proof $\tilde{\varphi}$ is a solution of the equation of the form of (31). This completes the proof of the proposition \ref{2}.
\end{proof}
Applying Proposition \ref{p1} and Proposition \ref{2}, we can prove that (8) is $C^{k}$ equivalent to the following system
\begin{equation}
\label{eq5}
 \dot{x}=B(t)x+P(t,x)+r_{7}(t,x),
\end{equation}
where $r_{7}=O(x_{n}^{n})$.
\begin{equation}
\label{eq6}
 \dot{x}=B(t)x+P(t,x).
\end{equation}

\subsection{Estimate of the remaining term}
\textbf{Condition 3}\\
a)$\lambda_{n}<\cdots<\lambda_{1}$, $\lambda_{n}<0$;\\
b)$n\bar{\lambda}_{n}-\underline{\lambda}_{n}+(n+1)(\varepsilon+\delta)<0$.\\
where $\varepsilon$ and $\delta$ satisfied Theorem \ref{th3}.
\begin{prop}
\label{p3}
If \textbf{Condition 3} hold, then system (\ref{eq5}) is $C^{k}$ equivalent to the system (\ref{eq6}).
\end{prop}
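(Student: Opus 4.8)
The plan is to mirror the constructions of Propositions \ref{p1} and \ref{2}, now eliminating the flat remainder $r_{7}=O(x_{n}^{n})$ in a single step by a near-identity transformation of order $x_{n}^{n}$. Let $\varphi(t,s,x)$ denote the solution of (\ref{eq5}) and $\phi(t,s,x)$ the solution of (\ref{eq6}); since (\ref{eq6}) is precisely the partially linear system of Section~4, the flow $\phi$ enjoys all the estimates of Lemmas \ref{lemma1}, \ref{lemma2} and the subsequent derivative bound on $\phi_{n}$. I would define, for $s,t\in\mathbb{R}$, a field $h_{s,t}:\mathbb{R}^{n-1}\to\mathbb{R}^{n}$ componentwise by
$$h_{s,t,j}(x_{1},\dots,x_{n-1})=\lim_{x_{n}\to 0}\frac{\varphi_{j}(s,t)\phi(t,s)(x)-x_{j}}{x_{n}^{n}},\qquad 1\le j\le n,$$
and then set $H_{s}(x)=x+h_{s}(x_{1},\dots,x_{n-1})\,x_{n}^{n}$ with $h_{s}=\lim_{t\to\infty}h_{s,t}$. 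The conjugated flow $\tilde{\varphi}(t,s,x)=H_{t}^{-1}\varphi(t,s)H_{s}(x)$ should then generate a system of the form (\ref{eq6}).

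First I would carry out Step~1: differentiate $\varphi_{j}(\tau,t)\phi(t,s)(x)$ in $\tau$ along (\ref{eq5}), subtract the corresponding derivative of $\phi_{j}$ along (\ref{eq6}), and apply the Mean Value Theorem to the $p_{j}$ terms exactly as in Proposition \ref{p1}. This linearizes the evolution of the auxiliary quantity $R(\tau,t,s,\cdot)$ and yields an integral representation
$$h_{s,t}(x_{1},\dots,x_{n-1})=-\int_{s}^{t}\Lambda\cdot\Lambda_{n}(u,s)^{n}\,g_{u,s}(x_{1},\dots,x_{n-1})\,du,$$
where $g_{u,s}$ collects the leading $x_{n}^{n}$-coefficient of $r_{7}$ together with the factor $\exp(n\int_{s}^{u}p_{n}\,dv)$ and the $P$-variational matrix, just as in the $g_{u,s}$ formula in the proof of Proposition \ref{p1}. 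Next, in Step~2, I would estimate the integrand: Theorem \ref{th3} gives $\|\Lambda_{j}(s,u)\|\le K(\varepsilon)e^{(-\underline{\lambda}_{j}+\varepsilon)(u-s)}$ and $\|\Lambda_{n}(u,s)^{n}\|\le K(\varepsilon)^{n}e^{n(\bar{\lambda}_{n}+\varepsilon)(u-s)}$, while boundedness of the $p_{i}$-derivatives gives $\|g_{u,s}\|\le Me^{(n+1)\delta(u-s)}$. Because $\lambda_{n}<\dots<\lambda_{1}$, the worst direction is $j=n$ (smallest $\underline{\lambda}_{j}$), so the integrand is bounded by $e^{(n\bar{\lambda}_{n}-\underline{\lambda}_{n}+(n+1)(\varepsilon+\delta))(u-s)}$; Condition~3(b) makes this exponent negative and forces $\lim_{t\to\infty}h_{s,t}$ to exist uniformly in $(x_{1},\dots,x_{n-1})$.

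Steps~3 and~4 handle regularity: differentiating the integrand up to order $k$ in $x_{1},\dots,x_{n-1}$ and in $s$ produces extra factors $\partial^{i}\phi/\partial x^{i}$ controlled by Lemma \ref{lemma2} and the subsequent estimate on $\phi_{n}$, and I would again invoke Condition~3(b) to obtain uniform convergence of the differentiated improper integrals, so that $h_{s}$ and $\partial_{s}h_{s}$ are $C^{k}$ exactly as in Step~3 of Propositions \ref{p1} and \ref{2}. Finally, in Step~5, the Implicit Function Theorem yields $H_{t}^{-1}$ in the same near-identity, order-$x_{n}^{n}$ form with matching regularity; differentiating $\tilde{\varphi}(t,s,x)=H_{t}^{-1}\varphi(t,s)H_{s}(x)$ in $t$ and using the cocycle identity $\tilde{\varphi}(t,s,\tilde{\varphi}(s,\tau,x))=\tilde{\varphi}(t,\tau,x)$ shows that $\tilde{\varphi}$ solves $\dot{x}=\tilde{f}(t,x)$, whose flat $x_{n}^{n}$ part has been cancelled precisely by the choice of $h_{s}$, hence a system of the form (\ref{eq6}).

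The main obstacle I expect lies in Steps~2 and~3: verifying that the spectral-gap exponent remains negative once all exponential factors are collected, i.e.\ that the $x_{n}^{n}$ growth rate $n\bar{\lambda}_{n}$, the backward growth $-\underline{\lambda}_{n}$ of the slowest direction, and the $(\varepsilon+\delta)$-losses combine to exactly Condition~3(b), and that this survives repeated differentiation so that full $C^{k}$ regularity of $h_{s}$ (not merely continuity) is obtained. Justifying that the two limits $x_{n}\to 0$ and $t\to\infty$ may be exchanged in the right order, and that $H_{t}$ is genuinely invertible with the claimed smoothness, are the remaining technical points, but they follow the now-familiar pattern of the two preceding propositions.
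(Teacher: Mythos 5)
There is a genuine gap in your construction. You model the transformation on Propositions \ref{p1} and \ref{2}, setting $H_{s}(x)=x+h_{s}(x_{1},\dots,x_{n-1})\,x_{n}^{n}$ with $h_{s}$ obtained from the limit $\lim_{x_{n}\to 0}\bigl(\varphi(s,t)\phi(t,s)(x)-x\bigr)/x_{n}^{n}$. By design this limit extracts only the \emph{leading} Taylor coefficient of the defect in $x_{n}$, so a near-identity transformation whose correction is a monomial $x_{n}^{n}$ times a function of $(x_{1},\dots,x_{n-1})$ can cancel only the $x_{n}^{n}$-term of $r_{7}$; the conjugated system still carries an $O(x_{n}^{n+1})$ remainder and is therefore \emph{not} of the form (\ref{eq6}). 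That order-raising mechanism is exactly what Propositions \ref{p1} and \ref{2} already do, and Proposition \ref{p3} is precisely the step where it no longer suffices: one must remove the entire flat tail $r_{7}=O(x_{n}^{n})$ at once, not just its lowest-order part. Your closing claim that ``the flat $x_{n}^{n}$ part has been cancelled\dots hence a system of the form (\ref{eq6})'' is where the argument fails.

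The paper's proof uses a structurally different transformation: it sets $H_{s}(t,x)=\varphi(s,t,\phi(t,s,x))$, i.e.\ the full composition of the backward flow of (\ref{eq5}) with the forward flow of (\ref{eq6}), with no division by $x_{n}^{n}$ and no restriction to a monomial ansatz. Differentiating in $t$ gives the Alekseev-type representation $H_{s}(t,x)=x-\int_{s}^{t}\partial_{x}\varphi(s,u,\phi(u,s,x))\,r_{7}(u,\phi(u,s,x))\,du$, whose correction depends on all of $x$ (including $x_{n}$) and absorbs the whole remainder. The limit $t\to\infty$ exists because $\|\partial_{x}\varphi(s,u,\cdot)\|\le Le^{(\varepsilon+\delta-\underline{\lambda}_{n})(u-s)}$ while $\|r_{7}(u,\phi(u,s,x))\|\le CK(\varepsilon)^{n}|x_{n}|^{n}e^{n(\bar{\lambda}_{n}+\varepsilon+\delta)(u-s)}$, combining to the exponent $n\bar{\lambda}_{n}-\underline{\lambda}_{n}+(n+1)(\varepsilon+\delta)$ of Condition 3(b) --- the same exponent you correctly identify in your Step 2, but attached to the wrong object. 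Your exponential bookkeeping and the regularity discussion via Lemma \ref{lemma2} would transfer essentially unchanged to the correct $H_{s}$; the fix is to replace the monomial ansatz by the flow composition and verify the conjugacy identity $H_{t}(\phi(t,s,x))=\varphi(t,s,H_{s}(x))$ directly, as the paper does.
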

\begin{proof}
In order to construct a smooth transformation between system (\ref{eq5}) and system (\ref{eq6}), we define the following function $H_{s}(t,\cdot):\mathbb{R}^{n}\rightarrow\mathbb{R}^{n}, s,t\in \mathbb{R}$,by
\begin{equation}
  H_{s}(t,x)=\varphi(s,t,\phi(t,s,x))\quad \text{for}\quad x\in \mathbb{R}^{n}.
\end{equation}
Step 1: we will show that
\begin{equation}
   H_{s}(t,x)=x-\int_{s}^{t}\frac{\partial \varphi}{\partial x}(s,u,\phi(u,s,x))r_{7}(u,\phi(u,s,x))du,
\end{equation}
taking derivative with respect to the t-variable in (38) leads to
\begin{equation}
  \frac{\partial H_{s}}{\partial t}(t,x)=\frac{\partial \varphi}{\partial t}(s,t,\phi(t,s,x))+\frac{\partial \varphi}{\partial x}(s,t,\phi(t,s,x))\frac{d}{dt}\phi(t,s,x).
\end{equation}

By definition of $\phi$, we get that
\begin{equation}
\frac{d}{dt}\phi(t,s,x)=B(t)\phi(t,s,x)+\begin{pmatrix}
p_{1}(t,\phi^{'}(t,s,x))\\
         \cdots\\
p_{n-1}(t,\phi^{'}(t,s,x))\\
p_{n}(t,\phi^{'}(t,s,x))\phi_{n}(t,s,x)
\end{pmatrix},
\end{equation}
where $\phi^{'}(t,s,x)=\phi_{1}(t,s,x),\cdots,\phi_{n-1}(t,s,x).$\\
Therefore,
\begin{equation}
\begin{split}
   \frac{\partial \varphi}{\partial t}(s,t,x)&=\lim\limits_{\Delta\rightarrow 0}\frac{\varphi(s,t+\Delta,z)-\varphi(s,t,z)}{\Delta}\\
   &=\lim\limits_{\Delta\rightarrow 0}\frac{\varphi(s,t,\varphi(t,t+\Delta,z))-\varphi(s,t,z)}{\Delta}\\
   &=\frac{\partial \varphi}{\partial x}(s,t,z)\lim\limits_{\Delta\rightarrow 0}\frac{\varphi(s,t+\Delta,z)-z}{\Delta}.
\end{split}
\end{equation}
Since
$$\varphi(s,t+\Delta,z)=z-\Delta\left(B(t)z+\begin{pmatrix}
p_{1}(t,z^{'})\\
         \cdots\\
p_{n-1}(t,z^{'})\\
p_{n}(t,z^{'})z_{n}
\end{pmatrix}+r_{7}(t,z)
\right)+O(\Delta^{n}),
$$
it follows that
$$\frac{\partial \varphi}{\partial t}(t,t,x)=-B(t)z-\begin{pmatrix}
p_{1}(t,z^{'})\\
         \cdots\\
p_{n-1}(t,z^{'})\\
p_{n}(t,z^{'})z_{n}
\end{pmatrix}-r_{7}(t,z),$$
where $z^{'}=z_{1},\cdots,z_{n-1}.$\\
Replacing $z$ by $\phi(t,s,x)$, we obtain that
\begin{equation}
\begin{split}
&\frac{\partial \varphi}{\partial t}(t,t,\phi(t,s,x))=-B(t)\phi(t,s,x)\\
&-\begin{pmatrix}
p_{1}(t,\phi^{'}(t,s,x))\\
         \cdots\\
p_{n-1}(t,\phi^{'}(t,s,x))\\
p_{n}(t,\phi^{'}(t,s,x))\phi_{n}(t,s,x)
\end{pmatrix}-r_{7}(t,\phi(t,s,x)),
\end{split}
\end{equation}
which together with (41) gives that
\begin{equation}
\begin{split}
 & \frac{\partial \varphi}{\partial t}(t,t,\phi(t,s,x))=-\frac{\partial \varphi}{\partial x}(s,t,\phi(t,s,x))[B(t)\phi(t,s,x)\\
&+\begin{pmatrix}
p_{1}(t,\phi^{'}(t,s,x))\\
         \cdots\\
p_{n-1}(t,\phi^{'}(t,s,x))\\
p_{n}(t,\phi^{'}(t,s,x))\phi_{n}(t,s,x)
\end{pmatrix}+r_{7}(t,\phi(t,s,x))].
\end{split}
\end{equation}

Combining the above equality and (39) and (40), we obtain that
\begin{equation}
  \frac{\partial H_{s}}{\partial t}(t,x)=-\frac{\partial \varphi}{\partial x}(s,t,\phi(t,s,x))r(t,\phi(t,s,x)),
\end{equation}
which together with the fact that $H_{s}(s,x)=x$ proves (38).\\
Step 2: In this step, we show that the map
\begin{equation}
  H_{s}(x)=\lim\limits_{t\rightarrow\infty}H_{s}(t,x)=x-\int_{s}^{\infty}\frac{\partial \varphi}{\partial x}(s,u,\phi(u,s,x))r(u,\phi(u,s,x))du
\end{equation}
is well-defined and fulfills the following properties:

\begin{enumerate}
\item $\lim\limits_{x\rightarrow0}H_{t}(x)=0$ uniformly in t,
\item $H$ maps a solution of (36) to a solution of (35).
\end{enumerate}

For this purpose, let $t_{1}>t_{2}\geq s$ be arbitrary. we obtain that for some positive constant $L$
\begin{equation}
\begin{split}
  &\parallel\int_{t_{2}}^{t_{1}}\frac{\partial \varphi}{\partial x}(s,u,\phi(u,s,x))r_{7}(u,\phi(u,s,x))du\parallel\\
  &\leq L\int_{t_{2}}^{t_{1}}e^{(u-s)(\varepsilon+\delta-\underline{\lambda}_{n})}\parallel r_{7}(u,\phi(u,s,x))\parallel du\\
  &\leq CLK(\varepsilon)^{n}|x_{n}|^{n}\int_{t_{2}}^{t_{1}}e^{(u-s)(n\bar{\lambda}_{n}-\underline{\lambda}_{n}+(n+1)(\varepsilon+\delta))}du
\end{split}
\end{equation}
For $n\bar{\lambda}_{n}-\underline{\lambda}_{n}+(n+1)(\varepsilon+\delta)<0$, the following improper integral
$$\int_{s}^{\infty}\frac{\partial \varphi}{\partial x}(s,u,\phi(u,s,x))r_{7}(u,\phi(u,s,x))du,$$
exists. \\
Furthermore, we have
\begin{equation}
\begin{split}
  \parallel H_{s}-x \parallel &\leq CLK(\varepsilon)^{n}|x_{n}|^{n}\int_{s}^{\infty}e^{(u-s)(n\bar{\lambda}_{n}-\underline{\lambda}_{n}+(n+1)(\varepsilon+\delta))}du\\
  &=\frac{CLK(\varepsilon)^{n}}{n\bar{\lambda}_{n}-\underline{\lambda}_{n}+(n+1)(\varepsilon+\delta)}|x_{n}|^{n},
\end{split}
\end{equation}
which proves (i). By definition of $H$, we get that for $t,s\in \mathbb{R}$ and $x\in \mathbb{R}^{n}$
\begin{equation}
\begin{split}
H_{t}(\phi(t,s,x))&=\lim\limits_{u\rightarrow \infty}\varphi(t,u,\phi(u.t)\phi(t,s,x))\\
&=\varphi(t,u,\lim\limits_{u\rightarrow \infty}\varphi(s,u,\phi(u,s,x)))\\
&=\varphi(t,s,H_{s}(x)),
\end{split}
\end{equation}
which proves that $H$ maps a solution of (36) to a solution of (35).

Step 3:$H_{s}$ is $C^{k}$.\\
Anology to Step 3 of proposition \ref{p1}, we can prove step 3 of proposition \ref{p3}. According to step 1, step 2, step 3, proposition \ref{p3} is proved.
\end{proof}
\textbf{Condition 4} \\
a) $\lambda_{n}<\cdots<\lambda_{1}$, $\lambda_{n}<0$;\\
b) $k\lambda_{n}+m\lambda_{1}-m\lambda_{n}<0 $ as $ \bar{\lambda}_{1}\geq0$ and $m\geq n$ be arbitrary;\\
c) $k\lambda_{n}+\lambda_{1}-m\lambda_{n}<0~and~k\lambda_{n}-\lambda_{n}<0$ as $ \bar{\lambda}_{1}<0$ and $m\geq n$ be arbitrary.

\begin{thm}
\label{th}
If \textbf{Condition 4}  hold,
then system (4) is $C^{k}$ equivalent to the following system
\begin{equation}
\dot{x}=B(t)x+\begin{pmatrix}
p_{1}(t,x_{1})\\
p_{2}(t,x_{1})x_{2}\\
         \cdots\\
p_{n}(t,x_{1})x_{n}
\end{pmatrix}.
\end{equation}
\end{thm}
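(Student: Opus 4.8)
The plan is to prove Theorem~\ref{th} by induction on the dimension $n$, with the planar partial linearization of \cite{article9} as the base case $n=2$, and with the reduction machinery of Propositions~\ref{p1}, \ref{2} and~\ref{p3} serving as the inductive step. Before the induction proper I would normalize the given system: Theorem~\ref{th3} replaces system~(4) by a $C^{k}$-equivalent system $\dot{x}=B(t)x+r_{1}(t,x)$ with diagonal linear part whose blocks carry the spectral intervals $\lambda_{1}>\cdots>\lambda_{n}$, and the invariant-manifold theorem of Section~3 then lets me pass to a representative $\dot{x}=B(t)x+r_{2}(t,x)$ for which the nested flat manifolds $\{x_{j}=\cdots=x_{n}=0\}$ are invariant. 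This is exactly the configuration in which the three propositions operate, and these nested manifolds are the filtration along which the normalization proceeds.

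For the inductive step I would peel off the bottom coordinate $x_{n}$, which carries the most contracting interval $\lambda_{n}<0$. Applying Proposition~\ref{p1} and Proposition~\ref{2} order by order in powers of $x_{n}$ strips the $x_{n}$-dependence from the first $n-1$ components up to a flat remainder $O(x_{n}^{n})$, and Proposition~\ref{p3} then removes that remainder; the outcome is the $x_{n}$-decoupled form $\dot{x}=B(t)x+P(t,x)$. In this form the equations for $x_{1},\dots,x_{n-1}$ constitute a self-contained system of precisely the same type (diagonal linear part, globally bounded partial derivatives, and the flat invariant manifolds inherited by restriction), so that---once the inherited spectral data is checked against the hypothesis---the induction hypothesis applies and puts the $(x_{1},\dots,x_{n-1})$-subsystem into the partially linear form. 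Extending this transformation by the identity in $x_{n}$ keeps the last equation linear in $x_{n}$, but with a coefficient $\hat{p}_{n}(t,x_{1},\dots,x_{n-1})$ that still depends on all the base variables.

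The remaining task is to force $\hat{p}_{n}$ to depend on $x_{1}$ alone, which I would do with a multiplicative change of variables $y_{n}=u(t,x_{1},\dots,x_{n-1})\,x_{n}$. This converts the requirement into the cohomological equation $\frac{d}{dt}\log u=p_{n}^{\ast}(t,x_{1})-\hat{p}_{n}$ taken along the flow of the already-normalized base, whose solution is obtained by integrating $\hat{p}_{n}$ minus its $x_{1}$-section forward along trajectories with matched $x_{1}$-data---structurally the same construction as the maps $h_{s}$ built in the propositions. The growth control needed to make this integral converge is supplied by the derivative estimates of Lemma~\ref{lemma2} and of the lemma following it.

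The main obstacle is the spectral bookkeeping that makes every one of these integrals converge simultaneously. At each level of the recursion the propositions and the final multiplicative step each produce an improper integral whose convergence is a spectral-gap inequality of the type appearing in Conditions~1, 2 and~3; I must show that the single hypothesis, Condition~4---phrased only through the extreme intervals $\lambda_{1}$ and $\lambda_{n}$, the smoothness order $k$, and the auxiliary integer $m\ge n$---dominates all of these inequalities at every stage, including the spectral conditions required to invoke the induction hypothesis on each subsystem. This reduces to interval-arithmetic estimates that replace each intermediate interval by $\lambda_{1}$ or $\lambda_{n}$ using $\underline{\lambda}_{n}\le\underline{\lambda}_{j}$, $\bar{\lambda}_{j}\le\bar{\lambda}_{1}$ and $1\le l\le n-1\le m$, together with a verification that diagonality of the linear part, uniform boundedness of all partial derivatives, and invariance of the flat manifolds are preserved by each transformation, so that the propositions remain applicable at the next stage.
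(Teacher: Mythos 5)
Your plan follows the paper's own proof almost step for step: first normalize via Theorems~\ref{th2}, \ref{th3} and the invariant-manifold result of Section~3, then strip the $x_{n}$-dependence with Propositions~\ref{p1}, \ref{2}, \ref{p3} and recurse on the $(x_{1},\dots,x_{n-1})$-subsystem --- the paper compresses your induction on dimension into the single phrase ``repeating using Proposition~\ref{p1}, Proposition~\ref{2} and Proposition~\ref{p3}.'' The only point where you go beyond the paper is the final multiplicative correction $y_{n}=u(t,x_{1},\dots,x_{n-1})\,x_{n}$ killing the residual dependence of $\hat{p}_{n}$ on $x_{2},\dots,x_{n-1}$; the paper never addresses this, and your observation that plain repetition of the propositions leaves the coefficient of $x_{n}$ (and of each $x_{j}$, $j\geq 3$) depending on all preceding variables is a genuine and needed supplement to the published argument, though its convergence estimate still has to be checked against Condition~4.
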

\begin{proof}
If \textbf{Condition 4} hold, then \textbf{Condition 1} and \textbf{Condition 2} hold. According to theorem \ref{th2}, system(4) is $C^{k}$ equivalent to system (5); According to theorem \ref{th3}, system(5) is $C^{k}$ equivalent to system (7); Using the result of section 3, system (7) is $C^{k}$ equivalent to system (8). From Proposition \ref{p1}, Proposition \ref{2} and Proposition \ref{p3}, we can see that system (8) is $C^{k}$ equivalent to (36). Repeating using Proposition \ref{p1}, Proposition \ref{2} and Proposition \ref{p3}, we can get system (36) is $C^{k}$ equivalent to system (49). Then Theorem \ref{th} is proved.
\end{proof}

\section{Example}
 Consider the system
\begin{equation}
\begin{split}
\dot{x}=&\begin{pmatrix}
\alpha+\sigma\xi(t)&-\alpha-\sigma\xi(t)  & 0    \\
\alpha+\sigma\xi(t)&-1-\alpha-\sigma\xi(t)& 0 \\
       0           & 0                    &-1-\alpha-\sigma\xi(t)
\end{pmatrix}x-\\
&\begin{pmatrix}
x_{1}^{4}-2x_{1}x_{2}x_{3}^{2}+x_{2}x_{3}^{2}\\
x_{1}^{3}x_{2}-2x_{1}^{2}x_{2}^{2}+x_{2}x_{3}^{3}\\
x_{1}^{3}x_{3}-2x_{1}x_{2}x_{3}^{2}+x_{2}x_{3}^{3}\\
\end{pmatrix}
\end{split}
\end{equation}
By using the result\cite{article10} of Palmer, when $\alpha, \sigma$ are sufficiently small, the dichotomy spectral of the following system
$$\dot{x}=\begin{pmatrix}
\alpha+\sigma\xi(t)&-\alpha-\sigma\xi(t)  & 0    \\
\alpha+\sigma\xi(t)&-1-\alpha-\sigma\xi(t)& 0 \\
       0           & 0                    &-1-\alpha-\sigma\xi(t)
\end{pmatrix}x$$ consist of three disjoint intervals. Meanwhile, there exist $k,N$ such that the gap conditions in theorem 3.1 are satisfied. So (49) is $C^{k}$ equivalent to a system of the following form
$$\dot{x}=\Lambda(t)x+\begin{pmatrix}
p_{1}(t,x_{1},0)\\
p_{2}(t,x_{1},0)x_{2}\\
p_{3}(t,x_{1},0)x_{3}\\
\end{pmatrix},
$$
where $p_{1},p_{2},p_{3}$ are $C^{k}$ Carath\'{e}odory functions and
$$p_{1}(t,0,0)=p_{2}(t,0,0)=p_{3}(t,0,0)=\frac{\partial p_{1}}{\partial x_{1}}(t,0,0)=0.$$
As for the form of functions $p_{1},p_{2},p_{3}$, we can compute it like the computation of the invariant manifold \cite{article11}.

\end{document}